\newcommand{\itref}{\ref}
\numberwithin{equation}{section}
\newtheorem{theorem}{Theorem}[section]
\newtheorem{lemma}[theorem]{Lemma}
\newtheorem{proposition}[theorem]{Proposition}
\newtheorem{corollary}[theorem]{Corollary}
\theoremstyle{definition}
\newtheorem{example}[theorem]{Example}
\newtheorem{remark}[theorem]{Remark}
\newtheorem{definition}[theorem]{Definition}
\newtheorem{problem}[theorem]{Problem}
\newcommand{\be}{\begin{equation}}
\newcommand{\ee}{\end{equation}}
\newcommand{\bes}{\begin{equation*}}
\newcommand{\ees}{\end{equation*}}
\newcommand{\cA}{\mathcal{A}}
\newcommand{\cH}{\mathcal{H}}
\newcommand{\cK}{\mathcal{K}}
\newcommand{\cL}{\mathcal{L}}
\newcommand{\bC}{\mathbb{C}}
\newcommand{\bD}{\mathbb{D}}
\newcommand{\bN}{\mathbb{N}}
\newcommand{\bR}{\mathbb{R}}
\newcommand{\bT}{\mathbb{T}}
\newcommand{\ol}{\overline}
\newcommand{\re}{\operatorname{Re}}
\newcommand{\im}{\operatorname{Im}}
\newcommand{\Aut}{\operatorname{Aut}}
\newcommand{\id}{\operatorname{id}}
\newcommand{\spn}{\operatorname{span}}
\begin{document}

\title{Dilations of $q$-commuting unitaries}

\author{Malte Gerhold}
\address{M.G., Institut f\"ur Mathematik und Informatik \\
Ernst Moritz Arndt Universit\"at
Greifswald\\Walther-Rathenau-Stra\ss{}e 47 \\
17487 Greifswald \\ Germany\\}
\address{Faculty of Mathematics\\
Technion - Israel Institute of Technology\\
Haifa\; 3200003\\
Israel}
\email{mgerhold@uni-greifswald.de}
\urladdr{www.math-inf.uni-greifswald.de/index.php/mitarbeiter/282-malte-gerhold}

\author{Orr Moshe Shalit}
\address{O.S.\\Faculty of Mathematics\\
Technion - Israel Institute of Technology\\
Haifa\; 3200003\\
Israel}
\email{oshalit@technion.ac.il}
\urladdr{https://oshalit.net.technion.ac.il/}

\thanks{The work of M. Gerhold is partially supported by the DFG, project no.\  397960675.}
\thanks{The work of O.M. Shalit is partially supported by ISF Grant no.\ 195/16.
}
\subjclass[2010]{47A13, 46L07}
\keywords{Rotation algebra, dilation, commuting unitary dilation, almost Mathieu operator}
\begin{abstract}
Let $q = e^{i \theta} \in \bT$ (where $\theta \in \bR$), and let $u,v$ be $q$-commuting unitaries, i.e., $u$ and $v$ are unitaries such that $vu = quv$.
In this paper we find the optimal constant $c = c_\theta$ such that $u,v$ can be dilated to a pair of operators $c U, c V$, where $U$ and $V$ are commuting unitaries.
We show that
\[
c_\theta = \frac{4}{\|u_\theta+u_\theta^*+v_\theta+v_\theta^*\|},
\]
where $u_\theta, v_\theta$ are the universal $q$-commuting pair of unitaries, and we give numerical estimates for the above quantity.
In the course of our proof, we also consider dilating $q$-commuting unitaries to scalar multiples of $q'$-commuting unitaries.
The techniques that we develop allow us to give new and simple ``dilation theoretic'' proofs of well known results regarding the continuity of the field of rotations algebras.
In particular, for the so-called ``almost Mathieu operator'' $h_\theta = u_\theta+u_\theta^*+v_\theta+v_\theta^*$, we recover the fact that the norm $\|h_\theta\|$ is a Lipschitz continuous function of $\theta$, as well as the result that the spectrum $\sigma(h_\theta)$ is a $\frac{1}{2}$-H{\"o}lder continuous function in $\theta$ with respect to the Hausdorff metric.
In fact, we obtain this H{\"o}lder continuity of the spectrum for every selfadjoint $*$-polynomial $p(u_\theta,v_\theta)$, which in turn endows the rotation algebras with the natural structure of a continuous field of C*-algebras.
\end{abstract}

\maketitle

\section{Introduction}

Let $A_1, \ldots, A_d$ be contractions on a Hilbert space $\cH$ (by \emph{contraction} we mean a linear operator of norm less than or equal to $1$).
It is known \cite{DDSS17,HKMS19} that there exists a constant $c \geq 1$, and commuting normal contractions $B_1, \ldots, B_d$ on a Hilbert space $\cK \supseteq \cH$, such that $A$ is {\em the compression of $cB$ to $\cH$}:
\be\label{eq:dilationAB}
A = c P_\cH B \big|_\cH. 
\ee
By \eqref{eq:dilationAB} we mean that $A_i = c P_\cH B_i \big|_\cH$ for all $i=1, \ldots, d$, where $P_\cH$ denotes the orthogonal projection $P_\cH \colon \cK \to \cH$.
In this case we say that the {\em normal tuple} $cB = (cB_1, \ldots, cB_d)$ is a {\em dilation} of the tuple $A = (A_1, \ldots, A_d)$, and we write $A \prec cB$.
The research behind this paper was motivated by the following problem.

\begin{problem}\label{prob:dilconst}
Fix $d \in \bN$.
What is the smallest constant $C_d$ such that for every $d$-tuple of contractions $A$, there exists a $d$-tuple of commuting normal contractions $B$, such that \eqref{eq:dilationAB} holds with constant $c = C_d$?
\end{problem}

Dilation problems such as this arose in the setting of relaxation of spectrahedral inclusion problems \cite{HKMS19}, in interpolation problems for completely positive maps and the study of the structure of operator systems \cite{DDSS17,FNT17}, and have turned out to be connected to quantum information theory as well \cite{BN18}.
Passer, Shalit and Solel showed that if $A$ is a $d$-tuple of selfadjoint contractions, then there exists a $d$-tuple of commuting selfadjoint contractions $B$ such that \eqref{eq:dilationAB} holds with $c = \sqrt{d}$, and that this is the optimal constant for selfadjoint tuples \cite[Theorem 6.6]{PSS18}.
Moreover, it was shown by Passer in \cite[Theorem 4.4]{Passer} that if $A$ is not assumed selfadjoint, one can do with $c = \sqrt{2d}$.
Thus, we have the bounds
\be\label{eq:Cd}
\sqrt{d} \leq C_d \leq \sqrt{2d}.
\ee

In this paper we make some progress in our understanding of the general constant $C_d$ by studying a certain refinement of Problem \ref{prob:dilconst} which appears to us to be of independent interest.
Instead of dilating arbitrary tuples of contractions, we shall concentrate on dilating pairs of unitaries $u,v$ that satisfy the $q$-commutation relation $vu = quv$, and study the dependence of the dilation on the parameter $q$.
In the context of Problem \ref{prob:dilconst}, it is worth noting that Keshari and Mallick proved that every pair of $q$-commuting contractions has a $q$-commuting unitary (power) dilation \cite{KM19}; our work therefore has immediate implications to all pairs of $q$-commuting operators.

The primary goal here is to find the optimal constant $c_\theta$, such that every pair of $q$-commuting unitaries can be dilated to a pair $c_\theta U,c_\theta V$ where $U$ and $V$ are two commuting unitaries.
Let $u_\theta, v_\theta$ denote the universal $q$-commuting unitaries, where $q = e^{i\theta}$.
Using the fact that the Weyl unitaries can faithfully represent the rotation algebras (i.e., the universal C*-algebras generated by $q$-commuting unitaries), we show that $u_\theta, v_\theta$ is a compression of $c u_{\theta'}, cv_{\theta'}$, where $c = e^{\frac{1}{4}|\theta - \theta'|}$ (Theorem \ref{thm:qqtag}).
This has the important consequence that both the norm $\|u_\theta+u_\theta^*+v_\theta+v_\theta^*\|$ and the dilation constant $c_\theta$ are Lipschitz continuous functions of $\theta$; see Corollaries \ref{cor:Lipnorm} and \ref{cor:ctheta}, respectively.

Our main result is Theorem \ref{thm:opt_dilconst}, which says that
\be\label{eq:ctheta}
c_\theta = \frac{4}{\|u_\theta+u^*_\theta+v_\theta+v^*_\theta\|}.
\ee
This theorem follows from Proposition \ref{prop:lowerbnd}, in which it is shown that the right hand side of the above equation is a lower bound, and Theorem \ref{thm:optdil_qqtag}, a special case of which implies that it is an upper bound.
Theorem \ref{thm:optdil_qqtag} also implies that $u_\theta, v_\theta$ is a compression of $c u_{\theta'}, c v_{\theta'}$ for $c =  c_\gamma$ with $\gamma = \theta' - \theta$, and $c_\gamma$ turns out to be the optimal constant for this kind of dilation (see Theorem \ref{thm:optdil_gen}). 

Our proof in fact shows that every $q$-commuting pair of unitaries $U,V \in B(\cH)$ dilates to a pair $c_\gamma U', c_\gamma V'$ where $U', V'$ are $q'$-commuting unitaries acting on a space $\cK = \cH \otimes\cL$, and that if $q'/q$ is a primitive $n$th root of unity, then we can take $\dim \cL = n$. 
This result should be compared with \cite[Theorem 7.1]{DDSS17} (see also its precursors \cite{MS13} and \cite{Coh15}), which says that whenever a $d$-tuple $X$ of $m \times m$ matrices has any commuting normal dilation $N \in B(\cK)$ with joint spectrum $\sigma(N) \subseteq K$ (for some $K \subseteq \bC^d$), then such a dilation can be chosen so that $\dim \cK \leq 2m^3(d+1) + 1$. 

The operator $h_\theta = u_\theta + u_\theta^* + v_\theta + v_\theta^*$ which appears in the formula \eqref{eq:ctheta} for $c_\theta$ is a so-called ``almost Mathieu operator'', and has been the subject of intensive investigations by mathematicians and physicists. 
This operator is the Hamiltonian of a certain quantum mechanical system consisting of an electron in a magnetic field
\cite{Hof76}. 
If one arranges the spectra of all $h_\theta$ in the planar set $\{(\lambda, \theta) : \theta \in [0,2\pi], \lambda \in \sigma(h_\theta)\}$, then one gets the renowned Hofstadter butterfly \cite[Fig.~1]{Hof76}. 

Corollary \ref{cor:Lipnorm} on the Lipschitz continuity of $\|h_\theta\|$ is not new, and follows, e.g., from results of Bellisard \cite[Proposition 7.1]{Bel94}.
In fact much subtler and sharper facts are known.
For example, in \cite{CEY} Choi, Elliott, and Yui showed that the spectrum $\sigma(h_\theta)$ of $h_\theta$ depends H{\"o}lder continuously (in the Hausdorff metric) on $\theta$, with H{\"o}lder exponent $1/3$.
This was soon improved by Avron, Mouche, and Simon to H{\"o}lder continuity with exponent $1/2$ \cite{AMS90}.
In Section \ref{sec:cont-field}, we shall recover known generalizations of these results by what we consider to be much simpler proofs (see Theorem \ref{thm:Holder}).
(It is no coincidence that $\|h_\theta\|$ is Lipschitz continuous and $\sigma(h_\theta)$ is 1/2-H{\"o}lder continuous --- there are very general results connecting continuity of the norms and of the spectra; see \cite{BB16}.
The $1/2$-H{\"o}lder continuity of the spectrum also follows from the result of Haagerup and R{\o}rdam, who showed that there exist $1/2$-H{\"o}lder {\em norm} continuous paths $\theta \mapsto u_\theta \in B(\cH)$, $\theta \mapsto v_\theta \in B(\cH)$ \cite[Corollary 5.5]{HR95}.)
Consequently, we easily obtain the familiar fact that the rotation algebras form a continuous field of C*-algebras \cite{Ell82}.
For the sake of completeness, and since we think that the simple proofs we provide are of interest, we give the details (see Corollary \ref{cor:Hcontfield}).

Finally, we come back to Problem \ref{prob:dilconst}. 
We were somewhat surprised that, notwithstanding the enormous amount of work on the almost Mathieu operators $h_\theta$, it seems that it is still not known precisely what is the minimal value of the norm $\|h_\theta\|$ or the angle $\theta$ where this minimum is attained. The only result we know of appears in \cite{BZ05} (in partiuclar see Equation (1.13) there), where it is proved that $\|h_\theta\|\geq 2.56769$ for all $\theta$, leading to $\max_\theta c_\theta\leq \frac{4}{2.56769}\leq 1.558$.
In Section \ref{sec:numerical} we obtain numerical values for $c_\theta = 4/\|h_\theta\|$ for various $\theta$.
In Example \ref{ex:45} we calculate by hand $c_{\frac{4}{5}\pi} \approx 1.5279$,
and this allows us to push the lower bound $C_2 \geq 1.41...$ to $C_2 \geq 1.52$.
We also report on modest numerical computations that we carried out, which lead to an improved estimate $C_2 \geq \max_\theta c_\theta \geq 1.5437$ (and, therefore, $\min_\theta\|h_\theta\|\leq \frac{4}{1.5437}\leq 2.5912$, which is in agreement with the numerical results mentioned in \cite{BZ05}). 
The latter value is an approximation of the constant $c_{\theta_s}$ attained at the {\em silver mean} $\theta_s = 2\pi(\sqrt{2}-1)$, which we conjecture to be the angle where the maximum is attained.
However, we do not expect that the maximal value of $c_\theta$ will give a tight lower approximation for $C_2$ --- for obtaining the value of $C_2$ we will probably have to study dilations of free unitaries in more detail.

\begin{remark}
In the theory of operator spaces, there are the notions of minimal and maximal operator spaces over a normed space $V$, and there is a constant $\alpha(V)$ that quantifies the difference between the minimal and maximal operator space structures \cite{Pau92} (see also \cite[Chapter 14]{PauBook} and \cite[Chapter 3]{PisBook}). 
Additionally, there are notions of minimal and maximal operator systems \cite{PTT11}, closely related to the notions of minimal and maximal matrix convex sets \cite{DDSS17,FNT17,HKM16}. 
Given a convex set $K \subseteq \bC^d$, there is a constant $\theta(K)$ that quantifies the difference between the minimal and maximal matrix convex sets over $K$ \cite[Section 3]{PSS18}. 
The constant $C_d$ from Problem \ref{prob:dilconst} is neither $\alpha(\ell^\infty_d)$ nor is it $\theta(\ol{\bD}^d)$, as one might mistakenly think. 
The place of Problem \ref{prob:dilconst} within the theory of operator spaces has yet to be determined. 
\end{remark}

In September 2018, Mattya Ben-Efraim and Yuval Yifrach participated in the program ``Summer Research Projects'' in the Department of Mathematics at the Technion, and worked on a project under the supervision of the second author.
They ran numerical experiments on random matrices and found several pairs of $4 \times 4$ unitaries which cannot be dilated to commuting unitaries times $\sqrt{2}$, thereby showing that $C_2 > \sqrt{2}$.
Interestingly, they also observed that such ``counter examples'' seem to be rare, and to enjoy a certain structure.
Subsequently, following discussions with Benjamin Passer, they numerically computed $c_{\frac{2\pi}{3}}$ (using an adaptation of the algorithm from \cite{HKM13}, implemented in MATLAB with the package CVX) and found that it is also slightly bigger than $\sqrt{2}$ (see Example~\ref{ex:23} for the exact value).
It is important for us to point out that the research behind this paper was stimulated by their work.

\section{Some facts about rotation C*-algebras}

\begin{definition}\label{def:rotation-algebra}
Let $\theta\in \mathbb R$ and write $q=e^{i\theta}$.
We denote by $A_\theta$ the universal C*-algebra
\[
A_\theta=C^*(u,v \text{ unitary}\mid vu=quv),
\]
and we call $A_\theta$ a \emph{rational/irrational rotation C*-algebra} if $\frac{\theta}{2\pi}$ is rational/irrational respectively.
We shall write $u_\theta, v_\theta$ for the generators of $A_\theta$.
\end{definition}

Ever since they were introduced by Rieffel \cite{Rie81}, the rotation algebras have been of widespread interest to operator algebraists and mathematical physicists alike.
We will make use of the following well known facts about rotation C*-algebras; see, e.g., Boca's excellent book on the subject \cite[Theorems 1.9 and 1.10]{BocBook}.

\begin{itemize}
\item The irrational rotation C*-algebras are simple. 
In particular, for every pair of $q$-commuting unitaries $U,V\in B(\cH)$, the corresponding representation of $A_\theta$ is isometric, i.e., it yields an isomorphism $A_\theta\cong C^*(U,V)$; see also \cite[Chapter VI]{DavBook}.
\item Let $U,V$ be $q$-commuting unitaries with $n:=\min\{k\in\mathbb N\mid q^k=1\}<\infty$. 
Put
\be\label{eq:XY}
X=\mathrm{diag}(1,q,\ldots,q^{n-1}) \quad ,\quad Y = \begin{pmatrix}  & 1 &  &  &  \\  &  & 1 & & \\ & & & \ddots & \\ & & & & 1 \\ 1 & & & & \end{pmatrix}\quad .
\ee
Then every irreducible representation of $C^*(U,V)$ has dimension $n$ and is of the form
\begin{align*}
\pi(U)=\alpha X \, , \, \pi(V)=\beta Y
\end{align*}
for some $\alpha,\beta\in\mathbb T$.
\item Of course, if $C^*(u_\theta,v_\theta)$ is a rational rotation $C^*$-algebra, then there is an irreducible representation $\pi_{\alpha,\beta}$ with $\pi_{\alpha,\beta}(u)=\alpha X$ and $\pi_{\alpha,\beta}(v)=\beta Y$ for all $\alpha,\beta\in\mathbb T$.
Two such representations $\pi_{\alpha,\beta}, \pi_{\alpha',\beta'}$ are unitarily equivalent if and only if $\alpha'=q^k \alpha$ and $\beta'=q^\ell\beta$ for some $k,\ell\in\mathbb N$. We refer to $\pi_{1,1}$ as the \emph{standard representation} of $A_\theta$.
\end{itemize}

We shall consider $*$-polynomials in two noncommuting variables.
A \emph{$*$-polynomial} $p = p(x_1, x_2)$ in two noncommuting variables is nothing but a polynomial $q = q(z_1, z_2, z_3, z_4)$ (with complex coefficients) in four noncommuting variables evaluated at the tuple of variables $(x_1,x_1^*,x_2,x_2^*)$, that is $p(x_1,x_2) = q(x_1,x_1^*,x_2,x_2^*)$.
A $*$-polynomial $p(x_1,x_2)$ is said to be \emph{selfadjoint} if the formal application of the adjoint leaves it invariant; this implies that $p(a,b)$ is a selfadjoint operator for every pair of operators $a,b$.

\begin{lemma}\label{RR}
Let $A_\theta$ be a rational rotation C*-algebra and $U,V\in B(\cH)$ a pair of $q$-commuting unitaries, $q=e^{i\theta}$. 
Then the following are equivalent.
\begin{enumerate}[label=\textnormal{(\arabic*)}]
\item\label{RR:it:1} There is an isomorphism $A_\theta\cong C^*(U,V)$ with $u_\theta\mapsto U$ and $v_\theta\mapsto V$.
\item\label{RR:it:2} For all $\alpha,\beta\in\mathbb{T}$ there exists a $*$-automorphism $\Phi_{\alpha,\beta}$ of $C^*(U,V)$ with $\Phi_{\alpha,\beta}(U)=\alpha U$ and $\Phi_{\alpha,\beta}(V)=\beta V$.
\item\label{RR:it:3} For all $\alpha,\beta\in\mathbb T$ there is a representation of $C^*(U,V)$ with $U\mapsto\alpha X$ and $V\mapsto\beta Y$.
\end{enumerate}
\end{lemma}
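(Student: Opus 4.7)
The plan is to establish the cyclic chain of implications $(1) \Rightarrow (2) \Rightarrow (3) \Rightarrow (1)$. For $(1) \Rightarrow (2)$, I would invoke the universal property of $A_\theta$: for every $(\alpha,\beta) \in \mathbb{T}^2$ the pair $(\alpha u_\theta, \beta v_\theta)$ again consists of $q$-commuting unitaries, so universality provides a $*$-endomorphism $\Psi_{\alpha,\beta}$ of $A_\theta$ with $u_\theta \mapsto \alpha u_\theta$ and $v_\theta \mapsto \beta v_\theta$. The composition law $\Psi_{\alpha,\beta} \circ \Psi_{\alpha',\beta'} = \Psi_{\alpha\alpha',\beta\beta'}$ (check on generators) shows that each $\Psi_{\alpha,\beta}$ is invertible, hence an automorphism; transferring through the isomorphism of $(1)$ yields the required $\Phi_{\alpha,\beta}$ on $C^*(U,V)$.

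For $(2) \Rightarrow (3)$, I would start from the elementary fact that the unital C*-algebra $C^*(U,V)$ admits at least one irreducible representation $\pi_0$. The second bullet recalled before the lemma then pins down its shape: $\pi_0(U) = \alpha_0 X$ and $\pi_0(V) = \beta_0 Y$ for some $\alpha_0, \beta_0 \in \mathbb{T}$. Given a target $(\alpha,\beta)$, composing $\pi_0$ with $\Phi_{\alpha \alpha_0^{-1}, \beta \beta_0^{-1}}$ supplies a representation with $U \mapsto \alpha X$ and $V \mapsto \beta Y$; this is the only place where $(2)$ enters.

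For $(3) \Rightarrow (1)$, let $\rho \colon A_\theta \to C^*(U,V)$ denote the canonical surjection sending $u_\theta \mapsto U$, $v_\theta \mapsto V$ (which exists by universality of $A_\theta$). For each $(\alpha,\beta) \in \mathbb{T}^2$, the representation $\tilde\pi_{\alpha,\beta}$ of $C^*(U,V)$ furnished by $(3)$ composes with $\rho$ to give the irreducible representation $\pi_{\alpha,\beta}$ of $A_\theta$ from the third bullet, since the two maps agree on generators. Consequently $\ker(\rho) \subseteq \bigcap_{\alpha,\beta} \ker(\pi_{\alpha,\beta})$, and the only real content left is to verify that this intersection vanishes. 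This is the step I expect to be the main obstacle, though not a deep one: I would rely on the second bullet, which identifies every irreducible representation of $A_\theta$ with some $\pi_{\alpha,\beta}$ up to unitary equivalence, together with the standard fact that the irreducible representations jointly separate points of any C*-algebra. Combined, these force $\ker(\rho) = 0$, so $\rho$ is the isomorphism required by $(1)$.
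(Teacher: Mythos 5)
Your proof is correct and follows essentially the same route as the paper: $(1)\Rightarrow(2)$ via the universal property, $(2)\Rightarrow(3)$ by twisting a single irreducible representation by the automorphism $\Phi_{\alpha\overline{\alpha_0},\beta\overline{\beta_0}}$, and $(3)\Rightarrow(1)$ by showing the canonical surjection $\rho\colon A_\theta\to C^*(U,V)$ is injective. The only cosmetic difference is in the last step: the paper phrases it as a norm estimate ($\|P(U,V)\|\geq\|P(u_\theta,v_\theta)\|$ for every $*$-polynomial $P$, forcing $\rho$ to be isometric), whereas you phrase it as $\ker\rho\subseteq\bigcap_{\alpha,\beta}\ker\pi_{\alpha,\beta}=\{0\}$; both rest on the same observation that every irreducible representation of $A_\theta$ factors through $\rho$.
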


\begin{proof}
$\itref{RR:it:1}\implies\itref{RR:it:2}$ follows easily from the universal property of $A_\theta$.

For $\itref{RR:it:2}\implies\itref{RR:it:3}$, choose an irreducible representation $\pi_0$ of $C^*(U,V)$.
Without loss of generality, we assume that $\pi_0(U)=\alpha_0X$ and $\pi_0(V)=\beta_0Y$ for some $\alpha_0,\beta_0\in\mathbb T$.
We get a representation $\pi$ of $C^*(U,V)$ with $\pi(U)=\alpha X,\pi(V)=\beta Y$ as $\pi:=\pi_0\circ\Phi_{\alpha\overline{\alpha_0},\beta\overline{\beta_0}}$.

Now we prove $\itref{RR:it:3}\implies\itref{RR:it:1}$.
For every $*$-polynomial $P$, we get
\begin{align*}
\|P(U,V)\|
\geq \sup_{\pi\in\mathrm{Rep}(C^*(U,V))} \|P(\pi(U),\pi(V))\|
\geq \sup_{(\alpha, \beta) \in \bT^2} \|P(\alpha X,\beta Y)\|\\
= \sup_{\pi\in\mathrm{Irr}(A_\theta)}\|P(\pi(u_\theta),\pi(v_\theta))\|=\|P(u_\theta,v_\theta)\|,
\end{align*}
so the surjective $*$-homomorphism from $A_\theta$ to $C^*(U,V)$ determined by $u_\theta\mapsto U, v_\theta\mapsto V$ has to be isometric and, thus, an isomorphism.
\end{proof}

Note that the condition that $U$ and $V$ have full spectrum, i.e., $\sigma(U)=\sigma(V)=\mathbb T$, is not enough to guarantee that $(U,V)$ generate a rotation algebra. Indeed, let $w$ be a unitary with full spectrum and $X,Y$ be the $q$-commuting matrices as in \eqref{eq:XY}. Then $U:=X\otimes w$ and $V:=Y\otimes w$ are $q$-commuting unitaries with full spectrum, but $\sigma(UV^*)$ is a finite set whereas $\sigma(u_\theta v_\theta^*) = \bT$.

We will require some basic facts on completely positive maps (see \cite{PauBook}).
If $A$ and $B$ are unital C*-algebras, a linear map $\phi \colon A \to B$ is said to be {\em positive} if $\phi(a) \geq 0$ whenever $a\geq 0$ and {\em unital} if $\phi(1) = 1$. 
A linear map $\phi \colon A \to B$ promotes to a map $\phi_n \colon M_n(A) \to M_n(B)$ by acting componentwise, and is said to be {\em completely positive} if $\phi_n$ is positive for all $n$.
A unital and completely positive map will be called a {\em UCP map}. 

By Stinespring's theorem \cite{Sti55}, for every UCP map $\phi \colon A \to B(\cH)$, there exists a Hilbert space $\cK$ containing $\cH$, and a unital $*$-representation $\pi \colon A \to B(\cK)$, such that
\[
\phi(a) = P_\cH \pi(a) \big|_\cH  
\]
for all $a \in A$, where $P_\cH$ is the orthogonal projection of $\cK$ onto $\cH$. 

An {\em operator system} is a vector subspace $S$ of a unital C*-algebra $A$ such that $1 \in S$ and $S = S^*$. 
An operator system $S$ is spanned by its positive elements, and the definitions of positive, completely positive, and UCP maps make sense for linear maps between operator systems. 
Arveson's extension theorem \cite[Theorem 1.2.3]{Arv69} states that if $S$ is an operator system contained in a unital C*-algebra $A$, then every UCP map of $S$ into $B(\cH)$ extends to a UCP map from $A$ into $B(\cH)$.

It is known that, roughly speaking, the existence of dilations is equivalent to the existence of UCP maps, as well as to certain matrix valued linear inequalities. 
We formulate this observation in a form immediately usable for our purposes. 

\begin{proposition}\label{prop:equiv}
Let $\theta,\theta'\in \bR$, put $q=e^{i\theta}$, $q' = e^{i\theta'}$, and let $c > 0$ be a constant.
The following are equivalent.
\begin{enumerate}[label=\textnormal{(\arabic*)}]
\item\label{equiv:it:1} Every $q$-commuting pair of unitaries $U,V$ can be dilated to a pair $cU',cV'$, where $U',V'$ are unitaries that $q'$-commute.
\item\label{equiv:it:2} The universal $q$-commuting pair of unitaries $u_\theta,v_\theta$ can be dilated to a pair $cU',cV'$, where $U',V'$ are unitaries that $q'$-commute.
\end{enumerate}
If $q' = 1$, then the above conditions are equivalent to the condition:
\begin{enumerate}[label=\textnormal{(\arabic*)},resume]
\item\label{equiv:it:3} Every $q$-commuting pair of unitaries $U,V$ can be dilated to a pair of commuting normal operators $M, N$ such that $\|M\|, \|N\| \leq c$.
\end{enumerate}
\end{proposition}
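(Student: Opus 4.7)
The implication \itref{equiv:it:1} $\Rightarrow$ \itref{equiv:it:2} is trivial, since $u_\theta, v_\theta$ is itself a $q$-commuting pair of unitaries once we fix any faithful representation of $A_\theta$ on a Hilbert space $\cH_0$. My plan for the converse is to translate the given dilation of the universal pair into a UCP map on a small operator system, transport it by universality of $A_\theta$, and then re-dilate via Stinespring. Concretely, fix a faithful representation $A_\theta \hookrightarrow B(\cH_0)$, and let $U_0', V_0'$ be $q'$-commuting unitaries on some $\cK_0 \supseteq \cH_0$ realizing \itref{equiv:it:2}. Compression $X \mapsto P_{\cH_0} X|_{\cH_0}$ gives a UCP map $C^*(U_0', V_0') \to B(\cH_0)$, and composing with the canonical $*$-homomorphism $A_{\theta'} \twoheadrightarrow C^*(U_0', V_0')$ (coming from the universal property of $A_{\theta'}$) produces a UCP map $\Phi \colon A_{\theta'} \to B(\cH_0)$ with $\Phi(u_{\theta'}) = \tfrac{1}{c} u_\theta$ and $\Phi(v_{\theta'}) = \tfrac{1}{c} v_\theta$. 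Restricted to the operator system
\[
S := \spn\{1, u_{\theta'}, u_{\theta'}^*, v_{\theta'}, v_{\theta'}^*\} \subseteq A_{\theta'},
\]
the map $\Phi$ visibly takes values in the copy of $A_\theta$ sitting inside $B(\cH_0)$.

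Given an arbitrary $q$-commuting pair $U, V \in B(\cH)$, the universal property of $A_\theta$ yields a unital $*$-homomorphism $\rho \colon A_\theta \to B(\cH)$ with $\rho(u_\theta) = U$ and $\rho(v_\theta) = V$. The composition $\rho \circ \Phi|_S \colon S \to B(\cH)$ is then a UCP map sending $u_{\theta'} \mapsto \tfrac{1}{c} U$ and $v_{\theta'} \mapsto \tfrac{1}{c} V$. Arveson's extension theorem extends it to a UCP map $\Psi \colon A_{\theta'} \to B(\cH)$, and a Stinespring dilation $\pi \colon A_{\theta'} \to B(\cK)$ of $\Psi$ on some $\cK \supseteq \cH$ gives the desired $q'$-commuting unitaries $U' := \pi(u_{\theta'}),\, V' := \pi(v_{\theta'})$, satisfying $U = c P_\cH U'|_\cH$ and $V = c P_\cH V'|_\cH$ by construction. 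This proves \itref{equiv:it:2} $\Rightarrow$ \itref{equiv:it:1}.

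Assume now $q' = 1$; I turn to \itref{equiv:it:1} $\Leftrightarrow$ \itref{equiv:it:3}. The forward direction is immediate, since $cU', cV'$ is already a pair of commuting normal operators of norm $c$. Conversely, if $U, V \prec M, N$ with $M, N$ commuting normal operators of norm at most $c$, then $\tfrac{1}{c}M, \tfrac{1}{c}N$ is a pair of commuting normal contractions, which admits a commuting unitary dilation $U', V'$ on a further enlargement of the Hilbert space (either by And\^o's theorem applied to commuting contractions, or by a direct construction using the joint spectral measure of $(M,N)$ and the standard $2\times 2$ unitary dilation of each scalar contraction in each coordinate, checking commutativity via Fuglede's theorem). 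Composing dilations then yields $U, V \prec cU', cV'$, as required.

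The only real delicacy is the bookkeeping in the (2) $\Rightarrow$ (1) step: one must observe that $\Phi|_S$ actually lands in the subalgebra $A_\theta \subseteq B(\cH_0)$, and not merely in $B(\cH_0)$, so that it can be post-composed with the $*$-homomorphism $\rho$ before Arveson's extension is invoked. Once this is arranged, everything else is a standard application of Arveson's extension, Stinespring, and the classical dilation theory for commuting normal contractions.
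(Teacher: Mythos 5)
Your proof is correct and takes essentially the same approach as the paper: (2)~$\Rightarrow$~(1) via the universal property of the rotation algebra together with Arveson's extension theorem and Stinespring's dilation theorem, and (3)~$\Rightarrow$~(1) by passing from commuting normal contractions to commuting unitaries (the paper uses the same Fuglede/Stone--Weierstrass $2\times 2$ block construction you sketch, and even mentions the Ando-theorem alternative in a remark). The one cosmetic difference is that the paper applies Arveson to extend the $*$-homomorphism $A_\theta \to C^*(U,V) \subseteq B(\cH)$ to a UCP map on $B(\cH_\theta)$ and then composes with the compression, which sidesteps the bookkeeping point you rightly flag about $\Phi|_S$ needing to land in the subalgebra $A_\theta$ before one can postcompose with $\rho$.
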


\begin{proof}
For general $q'$, we need only prove $\itref{equiv:it:2} \implies \itref{equiv:it:1}$.
Let $U,V$ be a $q$-commuting pair, and let $u_\theta,v_\theta$ be a universal $q$-commuting pair of unitaries generating $A_\theta$.
There is a surjective $*$-homomorphism $\pi \colon A_\theta \to C^*(U,V)$ mapping $u_\theta \mapsto U$ and $v_\theta \mapsto V$. 
Assume that $A_\theta$ is represented faithfully on a Hilbert space $\cH_\theta$, and that $C^*(U,V)$ is represented faithfully on a Hilbert space $\cH$; in other words, suppose $A_\theta\subset B(\cH_\theta)$ and $C^*(U,V)\subset B(\cH)$. 
By Arveson's extension theorem, $\pi$ extends to a UCP map $\widetilde\pi\colon B(\cH_\theta)\to B(\cH)$.  

By assumption, there is a $q'$ commuting pair $U',V'$ such that $cU',cV'$ is a dilation of $u_\theta,v_\theta$.
Composing the compression and $\widetilde{\pi}$, we obtain a UCP map $\phi \colon C^*(U',V') \to B(\cH)$ such that $\phi(cU') = U$ and $\phi(cV') = V$.
If $\sigma$ is the Stinespring dilation of $\phi$, then $c\sigma(U'),c\sigma(V')$ is the sought after $q'$-commuting dilation of $U,V$.

For the case $q' = 1$, we note that if $U,V$ can be dilated to a pair of commuting normal operators $M,N$ with joint spectrum in a compact convex set $L \subset \bC^2$, then $U,V$ can also be dilated to a pair of commuting normal operators $\tilde{M}, \tilde{N}$ with joint spectrum contained in the closure $\overline{\operatorname{ext}(L)}$ of the set of extreme points of $L$ (the proof of this fact follows ideas similar to the ones in the previous paragraphs; see \cite[Proposition 2.3]{PSS18} for details).
It follows that the existence of a commuting normal dilation implies the existence of a dilation by an appropriate scalar multiple of commuting unitaries.

Alternatively, again for the case $q' = 1$, we note (using Fuglede's and Stone-Weierstrass' theorems, for example) that if $M,N$ are commuting normal contractions, then
\[
U' = \begin{pmatrix} M & (1 - M M^*)^{1/2} \\ (1 - M^* M)^{1/2} & -M^* \end{pmatrix} \quad , \quad V' = \begin{pmatrix} N & 0 \\ 0 & N \end{pmatrix}
\]
are a commuting pair of normals that dilate $M$ and $N$, and $U'$ is a unitary.
Repeating this procedure with the roles reversed, we find that if the pair $U,V$ can be dilated to commuting normal contractions, then $U,V$ can be dilated to commuting unitaries.
From this one easily sees that \itref{equiv:it:3} implies \itref{equiv:it:2}.
The converse implication is trivial.
\end{proof}

\begin{remark}
We presented the preceding proposition and its proof in a way that fits our narrative and makes the paper essentially self-contained. 
Moreover, the proof presented above generalizes readily to the case of $d$-tuples instead of pairs. 
However, it is worth noting that by relying on Ando's dilation theorem and its generalization to $q$-commuting pairs of contractions \cite{KM19}, one can easily obtain equivalence of \itref{equiv:it:1} and \itref{equiv:it:2} with the condition:
  \begin{itemize}
\item[(3')] Every $q$-commuting pair of unitaries $U,V$ can be dilated to a pair of $q'$-commuting operators $S, T$ such that $\|S\|, \|T\| \leq c$.
\end{itemize}
This stronger conclusion is a special feature of the case $d=2$, and it will have no counterpart in the higher dimensional case. 
\end{remark}

\section{Weyl unitaries and continuity of the dilation scale}\label{sec:weyl-unitaries}

We study a concrete realization of the rotation algebras given by the well known Weyl unitaries.
Our reference for the Weyl unitaries is \cite[Section 20]{Par12}.
Following \cite{Par12}, our inner products will be linear in the {\em second} variable.
For a Hilbert space $H$ let
\[
\Gamma(H):=\bigoplus_{k=0}^{\infty} H^{\otimes_s k}
\]
be the symmetric Fock space over $H$.
The exponential vectors $e(x):=\sum_{k=0}^{\infty}\frac{1}{\sqrt{k!}} x^{\otimes k}, x\in H$ form a linearly independent and total subset of $\Gamma(H)$.
Clearly, $\langle e(x),e(y)\rangle = e^{\langle x,y\rangle}$ for all $x,y\in H$. For $z\in H$ we define the Weyl unitary $W(z)\in B(\Gamma(H))$ which is determined by
\[W(z) e(x)=e(z+x) \exp\left(-\frac{\|z\|^2}{2} - \langle z,x\rangle \right)\]
for all exponential vectors $e(x)$.
A simple calculation shows that $W(z),W(y)$ commute up to the phase factor $e^{2i \im \langle z,y\rangle}$; to be precise:
\[
W(y) W(z) = e^{2i \im \langle z,y\rangle} W(z) W(y).
\]

\begin{proposition}\label{prop:WeylGenAt}
For linearly independent $z,y \in H$, the operators $W(z)$ and $W(y)$ are the generators of a rotation C*-algebra $A_\theta$ for $\theta = 2\im \langle z,y\rangle$.
In other words, the $*$-representation of $A_{\theta}$ determined by $u_\theta\mapsto W(z),v_\theta\mapsto W(y)$ is isometric.
\end{proposition}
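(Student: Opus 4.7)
The plan is to produce the surjective $*$-homomorphism from $A_\theta$ onto $C^*(W(z),W(y))$ that comes from universality, and then argue injectivity, splitting into cases according to whether $\theta/(2\pi)$ is irrational or rational. The surjection exists because the Weyl commutation relation displayed just before the proposition gives $W(y)W(z) = q\,W(z)W(y)$ with $q = e^{i\theta}$ and $\theta = 2\im\langle z,y\rangle$, so the universal property of $A_\theta$ furnishes a surjective $*$-homomorphism $\pi\colon A_\theta \to C^*(W(z),W(y))$ with $u_\theta \mapsto W(z)$ and $v_\theta \mapsto W(y)$. In the irrational case the argument ends immediately: by the first bullet in the list of facts preceding the proposition, $A_\theta$ is simple, so $\ker\pi = 0$ and $\pi$ is isometric.

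For the rational case the tool is Lemma \ref{RR}, and I would verify condition \itref{RR:it:2}: for every $(\alpha,\beta) \in \mathbb{T}^2$, produce an automorphism $\Phi_{\alpha,\beta}$ of $C^*(W(z),W(y))$ scaling the generators by $\alpha$ and $\beta$. The natural candidate is inner conjugation by another Weyl unitary on $B(\Gamma(H))$: for any $\gamma \in H$, applying the Weyl relation to the pairs $(\gamma,z)$ and $(\gamma,y)$ yields
\[
W(\gamma)\, W(z)\, W(\gamma)^* = e^{2i\im\langle z,\gamma\rangle}\, W(z), \qquad W(\gamma)\, W(y)\, W(\gamma)^* = e^{2i\im\langle y,\gamma\rangle}\, W(y).
\]
Thus $\mathrm{Ad}\,W(\gamma)$ preserves $C^*(W(z),W(y))$ and scales its two generators by unimodular numbers that depend $\mathbb{R}$-linearly on $\gamma$.

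The remaining point is that, as $\gamma$ varies over $H$, the achievable pair of phases ranges over all of $\mathbb{T}^2$, and here is where the linear independence of $z,y$ is needed. The $\mathbb{C}$-linear map $H \to \mathbb{C}^2$, $\gamma \mapsto (\langle z,\gamma\rangle,\langle y,\gamma\rangle)$, must be surjective, because any $(c_1,c_2) \in \mathbb{C}^2$ annihilating its range (with respect to the standard inner product on $\mathbb{C}^2$) would force $\langle c_1 z + c_2 y,\gamma\rangle = 0$ for every $\gamma$, and hence $c_1 z + c_2 y = 0$. Taking imaginary parts of each coordinate, the $\mathbb{R}$-linear map $\gamma \mapsto (\im\langle z,\gamma\rangle,\im\langle y,\gamma\rangle)$ is surjective onto $\mathbb{R}^2$, so one can solve for $\gamma$ realizing any prescribed pair of phases $(\alpha,\beta)$. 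This builds the family $\Phi_{\alpha,\beta}$ required by Lemma \ref{RR}\itref{RR:it:2}, and the implication \itref{RR:it:2}$\Rightarrow$\itref{RR:it:1} then promotes $\pi$ to an isomorphism.

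I expect no real obstacle beyond recognizing the right conjugation trick: once one sees that $\mathrm{Ad}\,W(\gamma)$ acts on each $W(z)$ by a scalar dictated by $\im\langle z,\gamma\rangle$, the rational case collapses to a short linear-algebra argument, and the irrational case is handled for free by simplicity of $A_\theta$.
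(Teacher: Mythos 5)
Your proof is correct and follows essentially the same route as the paper's: dispatch the irrational case by simplicity of $A_\theta$, then in the rational case verify condition \itref{RR:it:2} of Lemma~\ref{RR} by conjugating with Weyl unitaries $W(\gamma)$ and using linear independence of $z,y$ to realize an arbitrary pair of phases. The only (immaterial) difference is that the paper picks $x$ explicitly in $\spn\{y,z\}$ solving a $2\times 2$ complex linear system, whereas you argue surjectivity of $\gamma\mapsto(\langle z,\gamma\rangle,\langle y,\gamma\rangle)$ via an annihilator.
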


\begin{proof}
In the irrational case there is nothing to prove.
In the rational case, we use Lemma \ref{RR}.
Note that for every $x \in H$,
\[W(x)^*W(z)W(x)=W(z)e^{2i\im\langle x,z\rangle},\quad W(x)^*W(y)W(x)=W(y)e^{2i\im\langle x,y\rangle}.
\]
Let $\alpha=e^{is},\beta=e^{it}\in\mathbb T$.
Since $z,y$ are linearly independent, there exists an $x\in \spn\{y,z\}$ such that $\langle x,z\rangle=\frac{is}{2}$ and $\langle x,y\rangle=\frac{it}{2}$, so that $e^{2i\im\langle x,z\rangle}=\alpha$,  $e^{2i\im\langle x,y\rangle}=\beta$.
Clearly, conjugation with the unitary $W(x)$ is an automorphism of $C^*(W(z),W(y))$, so we showed condition \itref{RR:it:2} of Lemma \ref{RR}.
\end{proof}

\begin{theorem}\label{thm:qqtag}
Let $\theta, \theta' \in \mathbb R$, set $q=e^{i\theta}, q'=e^{i\theta'}$, and put $c = e^{\frac{1}{4}|\theta - \theta'|}$.
Then for any pair of
$q$-commuting unitaries $U,V$ there exists a pair of $q'$-commuting unitaries $U',
V'$ such that $cU', cV'$ dilates $U, V$.
\end{theorem}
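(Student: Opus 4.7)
My plan is to use Proposition \ref{prop:equiv} to reduce the problem to dilating the \emph{universal} $q$-commuting pair $u_\theta,v_\theta$, and then to realize both the source pair and the dilating pair as Weyl unitaries on symmetric Fock space. The tensor-product factorization $\Gamma(H_1\oplus H_2)\cong \Gamma(H_1)\otimes\Gamma(H_2)$, together with compression to the vacuum vector, will produce the dilation almost automatically; the real task is to choose two additional vectors whose imaginary inner product makes up the deficit $\theta'-\theta$ while their norms remain as small as possible.

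By Proposition \ref{prop:WeylGenAt}, I may realize $(u_\theta,v_\theta)$ as $(W(z),W(y))$ acting on $\Gamma(H)$, for any linearly independent $z,y\in H$ with $2\im\langle z,y\rangle=\theta$. I then enlarge $H$ to $H\oplus\bC$ and set $z':=z\oplus w_1$, $y':=y\oplus w_2$, where $w_1,w_2\in\bC$ are chosen so that
\[
2\im\langle w_1,w_2\rangle = \theta'-\theta \qquad\text{and}\qquad \|w_1\|^2=\|w_2\|^2=\tfrac{1}{2}|\theta-\theta'|.
\]
These two conditions can be satisfied simultaneously precisely because the Cauchy--Schwarz bound is attained when the inner product is purely imaginary of the correct size; for instance I may take $w_1=\sqrt{|\theta-\theta'|/2}$ and $w_2=\epsilon i\sqrt{|\theta-\theta'|/2}$ with $\epsilon=\operatorname{sign}(\theta'-\theta)$. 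Since $2\im\langle z',y'\rangle=\theta+(\theta'-\theta)=\theta'$ and $z',y'$ are linearly independent, Proposition \ref{prop:WeylGenAt} gives that $(W(z'),W(y'))$ is a $q'$-commuting pair generating a copy of $A_{\theta'}$.

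Under the canonical identification $\Gamma(H\oplus\bC)\cong\Gamma(H)\otimes\Gamma(\bC)$, the Weyl operators factor as $W(z')=W(z)\otimes W(w_1)$ and $W(y')=W(y)\otimes W(w_2)$. I then compress to the subspace $\Gamma(H)\otimes\bC e(0)\cong \Gamma(H)$; a short calculation from the defining formula of $W(w)$ yields $\langle e(0),W(w_i)e(0)\rangle = e^{-\|w_i\|^2/2} = e^{-|\theta-\theta'|/4}=c^{-1}$. Hence the compression sends $W(z')\mapsto c^{-1}W(z)$ and $W(y')\mapsto c^{-1}W(y)$, so $(W(z),W(y))\prec (cW(z'),cW(y'))$, and Proposition \ref{prop:equiv} promotes this to a dilation of any $q$-commuting pair. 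The step requiring the most care is the choice of $w_1,w_2$: two real constraints must be met simultaneously, and it is precisely the equality case of Cauchy--Schwarz that makes the vacuum expectation $e^{-\|w_i\|^2/2}$ coincide with $c^{-1}$; any slack in that inequality would translate into a strictly worse dilation constant, so the appearance of $c=e^{|\theta-\theta'|/4}$ is tied exactly to this geometric extremal.
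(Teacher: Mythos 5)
Your proposal is correct and takes essentially the same approach as the paper: realize the universal $q$-commuting pair via Weyl operators, perturb the Weyl vectors by an ancilla component in a perpendicular direction so that the perturbed Weyl operators $q'$-commute, and observe that compression back to the original Fock space rescales each operator by exactly $e^{-|\theta-\theta'|/4}$. The only cosmetic difference is that you make the tensor factorization $\Gamma(H\oplus\mathbb{C})\cong\Gamma(H)\otimes\Gamma(\mathbb{C})$ explicit and compute the vacuum expectation $\langle e(0),W(w_i)e(0)\rangle$, whereas the paper works directly with the projection identity $PW(z)\big|_{\Gamma(H)}=e^{-\|p^\perp z\|^2/2}W(pz)$; the two computations are identical in content.
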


\begin{proof}
Consider Hilbert spaces $H\subset K$ with $p$ the projection onto $H$,
and the symmetric Fock spaces $\Gamma(H) \subset \Gamma(K)$ with $P$ the
projection onto $\Gamma(H)$.
We write $p^\perp $ for the projection onto the orthogonal complement $H^\perp $.
Note that for exponential vectors we have
$Pe(x)=e(px)$.
For every $y,z\in K$, the Weyl unitaries $W(y), W(z)$ satisfy:
\begin{enumerate}[label=\textnormal{(\arabic*)}]
\item $W(z),W(y)$ commute up to the phase factor $e^{2i \im \langle z,y\rangle}$.
\item $PW(z)\big|_{\Gamma(H)}= e^{-\frac{\|p^\perp  z\|^2}{2}}W(pz)$, so it is a scalar multiple of a unitary on $\Gamma(H)$.
\item $PW(z)\big|_{\Gamma(H)}, PW(y)\big|_{\Gamma(H)}$ commute up to the phase factor $e^{2i \im \langle pz,py\rangle} = e^{2i \im \langle z,py\rangle}$.
\end{enumerate}

Suppose without loss of generality that $0\leq\theta'<\theta\leq2\pi$ (if $\theta<\theta'$, then $2\pi-\theta>2\pi-\theta'$ and we can repeat the following argument for the complex conjugates $\overline q$ and $\overline {q'}$ by flipping $(U,V)$ to $(V,U)$ and $(U',V')$ to $(V',U')$; alternatively we can modify the step in the argument below in which parameters are chosen).
We claim that we can arrange things so that there are two linearly independent vectors $z,y$ so that $pz$ and $py$ are also linearly independent, and such that
\begin{enumerate}[label=\textnormal{(\arabic*)}]
\item $p^\perp y=-ip^\perp z$,
\item $\theta'=2\im \langle z,y\rangle$,
\item $\theta= 2\im \langle z,py\rangle$.
\end{enumerate}
Supposing for the moment that this can be done, then we get $q'$-commutation of
$W(z),W(y)$, $q$-commutation of $PW(z)\big|_{\Gamma(H)}, PW(y)\big|_{\Gamma(H)}$ and
\[
\theta-\theta' = -2\im \langle z,p^\perp  y\rangle = 2\|p^\perp z\|^2 = 2\|p^\perp y\|^2,
\]
so
\[
\left\|PW(z)\big|_{\Gamma(H)}\right\|=\left\|PW(y)\big|_{\Gamma(H)}\right\|=e^{-\frac{\|p^\perp  y\|^2}{2}}=e^{-\frac{|\theta-\theta'|}{4}} .
\]
Now put
\[
U= e^{\frac{|\theta-\theta'|}{4}}PW(z)\big|_{\Gamma(H)} \,\, , \,\, V= e^{\frac{|\theta-\theta'|}{4}} PW(y)\big|_{\Gamma(H)},
\]
and
\[
U'=W(z) \,\, , \,\, V'=W(y)
\]
to get the statement for this particular $q$-commuting pair $U,V$.

To find the $z$ and the $y$ above, let $H$ be a two dimensional space and let $z',y'$ be two linearly independent vectors such that $2 \im \langle z', y' \rangle = \theta$ (for example, one can take $z' = e_1 + e_2$ and $y' = e_1+i\frac{\theta}{2} e_2$, where $e_1,e_2$ are orthogonal unit vectors).
Now let $K$ be a proper superspace of $H$, and let $w \in H^\perp $ be a unit vector.
Define
\[
z = z' + \alpha w \quad \textrm{ and } \quad y = y' - i \alpha w
\]
for some real parameter $\alpha$ to be chosen soon.
Then clearly $p^\perp y = -i p^\perp z$ and $2\im \langle z, py\rangle = 2\im \langle z', y'\rangle = \theta$.
To fulfill all the requirements, it remains to satisfy
\[
2\im \langle z,y\rangle = \theta - 2 \alpha^2 = \theta',
\]
and clearly $\alpha$ can be chosen so that this holds.

So far, we proved the statement in the case that $U,V$ are a $q$-commuting pair represented by the Weyl operators of two appropriate linearly independent vectors.
By Proposition \ref{prop:WeylGenAt}, such $U, V$ constitute a universal pair of generators of $A_\theta$.
To obtain the statement for an arbitrary $q$-commuting pair $U,V$ we invoke Proposition \ref{prop:equiv}.
\end{proof}

Recall that a function $F\colon I \to \bR$ from an interval $I$ into $\bR$ is {\em Lipschitz continuous with contant $C$} if $|F(t) - F(s)| \leq C|t-s|$ for all $s,t \in I$. 
It is a basic fact that if $F$ is {\em locally Lipschitz continuous with constant $C$} in the sense that for every $t \in I$ there is a neighborhood $U$ of $t$ such that $F$ is Lipschitz with constant $C$ in $U$, then $F$ is Lipschitz with constant $C$. 

\begin{corollary}\label{cor:Lipnorm}
The norm of any matrix valued polynomial of degree one in $u_\theta,u_\theta^*,v_\theta,v_\theta^*$, and in particular the norm of the operator $h_\theta = u_\theta + u_\theta^* + v_\theta + v_\theta^*$, depends Lipschitz continuously on $\theta$.
\end{corollary}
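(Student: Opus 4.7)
The plan is to apply Theorem~\ref{thm:qqtag} and compare $L(u_\theta,v_\theta)$ directly with a compression of $L(u_{\theta'},v_{\theta'})$, for $L$ an arbitrary matrix-valued polynomial of degree one with coefficients $A_0,A_1,A_2,A_3,A_4$ in some $M_n(\bC)$. I will split $L = A_0\otimes 1 + L_1$ with $L_1 := A_1\otimes u + A_2\otimes u^* + A_3\otimes v + A_4\otimes v^*$ and set $M := \|A_1\|+\|A_2\|+\|A_3\|+\|A_4\|$.

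Theorem~\ref{thm:qqtag} provides a Hilbert space $\cK\supseteq\cH_\theta$ carrying a representation of $u_{\theta'},v_{\theta'}$ so that $u_\theta = c\,P u_{\theta'}|_{\cH_\theta}$ and $v_\theta = c\,P v_{\theta'}|_{\cH_\theta}$, where $P$ is the projection onto $\cH_\theta$ and $c := e^{|\theta-\theta'|/4}$. Substituting term by term yields $L_1(u_\theta,v_\theta) = c\,X$ with $X := (I_\cE\otimes P)L_1(u_{\theta'},v_{\theta'})(I_\cE\otimes P)|_{\cE\otimes\cH_\theta}$, while the constant piece $A_0\otimes 1$ is preserved by compression. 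Hence the full compression of $L(u_{\theta'},v_{\theta'})$ to $\cE\otimes\cH_\theta$ equals $A_0\otimes I + X$, leading to the identity
\[
L(u_\theta,v_\theta) - (I_\cE\otimes P)\,L(u_{\theta'},v_{\theta'})\,(I_\cE\otimes P)\big|_{\cE\otimes\cH_\theta} = (c-1)\,X.
\]
Taking norms, using $\|X\|\leq\|L_1(u_{\theta'},v_{\theta'})\|\leq M$ together with the fact that compressions do not enlarge norms, I will obtain $\|L(u_\theta,v_\theta)\|\leq\|L(u_{\theta'},v_{\theta'})\|+(c-1)M$; exchanging $\theta$ and $\theta'$ produces the matching reverse bound, so
\[
\bigl|\|L(u_\theta,v_\theta)\|-\|L(u_{\theta'},v_{\theta'})\|\bigr|\leq \bigl(e^{|\theta-\theta'|/4}-1\bigr)\,M.
\]

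To finish, I will use $(e^{x/4}-1)/x\to 1/4$ as $x\to 0$: for every $\epsilon>0$, in a sufficiently small neighborhood of any fixed point the right-hand side above is at most $\tfrac{M}{4}(1+\epsilon)|\theta-\theta'|$, so $\theta\mapsto\|L(u_\theta,v_\theta)\|$ is locally Lipschitz with constant $\tfrac{M}{4}(1+\epsilon)$. The local-implies-global principle for Lipschitz functions recalled just before the corollary then gives the same constant globally on $\bR$, and letting $\epsilon\to 0$ yields Lipschitz constant $M/4$. The almost Mathieu operator $h_\theta$ corresponds to $A_0=0,\,A_i=1$, giving Lipschitz constant $1$. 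The one real subtlety is the constant term $A_0\otimes 1$, which is preserved rather than scaled by $c$ under the dilation; splitting it off from the linear part $L_1$ before invoking Theorem~\ref{thm:qqtag} is precisely what makes the argument go through.
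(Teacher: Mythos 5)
Your proof is correct and follows the same basic route as the paper's: invoke Theorem~\ref{thm:qqtag} to realize the pair at angle $\theta$ as a compression of $c$ times the pair at angle $\theta'$ with $c=e^{|\theta-\theta'|/4}$, deduce a bound of the form $(c-1)\cdot(\text{const})$ on the difference of norms, symmetrize, and then pass from local to global Lipschitz continuity using $e^{x/4}-1\sim x/4$.

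The one place where you depart from (and in fact improve upon) the paper is the handling of the constant term $A_0\otimes 1$. The paper writes $p(u_\theta,v_\theta)\prec e^{|\theta-\theta'|/4}\,p(u_{\theta'},v_{\theta'})$ and then uses $\|p(u_\theta,v_\theta)\|\le c\,\|p(u_{\theta'},v_{\theta'})\|$. This is literally correct only when $p$ is homogeneous of degree one; if a constant term is present, the dilation of the tuple $(u_\theta,v_\theta)\prec c(u_{\theta'},v_{\theta'})$ compresses the identity to the identity, not to $c^{-1}$ times the identity, so $p(u_\theta,v_\theta)$ is \emph{not} a compression of $c\,p(u_{\theta'},v_{\theta'})$. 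Your splitting $L=A_0\otimes 1 + L_1$, comparing $L(u_\theta,v_\theta)$ with the actual compression $A_0\otimes I+X$ of $L(u_{\theta'},v_{\theta'})$, and isolating the discrepancy as $(c-1)X$, fills this gap cleanly. The resulting Lipschitz constant $\tfrac14\sum_{i=1}^4\|A_i\|$ differs from the paper's $\tfrac14\sup_\theta\|p(u_\theta,v_\theta)\|$ (neither dominates the other in general), but both specialize to the Lipschitz constant $1$ for $h_\theta$. So the argument is sound, and your care with the constant term is a genuine, if small, sharpening of the published proof.
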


\begin{proof}
For every such polynomial $p$, let $M = \sup_\theta \|p(u_\theta, v_\theta)\|$, which is finite by an elementary estimate (we are not assuming yet that $\theta \mapsto \|p(u_\theta, v_\theta)\|$ is continuous). 
By the theorem, 
\[
p(u_\theta,v_\theta) \prec e^{\frac{1}{4}|\theta - \theta'|} p(u_{\theta'},v_{\theta'}) 
\] 
for every $\theta, \theta'$.  
Assume without loss of generality that $\|p(u_\theta,v_\theta)\| \geq \|p(u_{\theta'},v_{\theta'})\|$. Then, 
\begin{align*}
\Big|\|p(u_\theta,v_\theta)\| - \|p(u_{\theta'},v_{\theta'})\| \Big| 
&\leq  e^{\frac{1}{4}|\theta - \theta'|} \|p(u_{\theta'},v_{\theta'})\| - \|p(u_{\theta'},v_{\theta'})\|  \\
& \leq \left(e^{\frac{1}{4}|\theta - \theta'|} - 1\right)M . 
\end{align*}
Now, $e^{\frac{1}{4}|\theta - \theta'|} = 1 + \frac{1}{4}|\theta - \theta'| + $ higher order terms, so for every $\varepsilon > 0$, the function $\theta \mapsto \|p(u_\theta,v_\theta)\|$ is locally Lipschitz continuous with constanst $(\frac{1}{4}+\varepsilon) M$. 
We conclude that this function has Lipschitz constant $\frac{M}{4}$, and, in particular, the function $\theta \mapsto \|h_\theta\|$ is Lipschitz continuous with constant $1$.
\end{proof}

For every $\theta \in \bR$ we write $q = e^{i\theta}$.
We define the optimal dilation scale
\[
c_\theta:=\inf\{c > 1\mid \exists \textrm{ a commuting normal dilation for } u_\theta,v_\theta \text{  with norm }\leq \, c\}
\]
and note that by Proposition~\ref{prop:equiv} this is the same as the infimum of the constants $c$ that satisfy: for every $q$-commuting pair of unitaries $U,V$ there exists a commuting normal dilation $M,N$ such that $\|M\|,\|N\|\leq c$.
In the following sections we determine $c_\theta$, and our proof of Theorem~\ref{thm:optdil_qqtag} will show that the infimum is actually a minimum (that the infimum is actually attained could also be proved by applying general principles, such as the ideas in Proposition \ref{prop:equiv}, together with the compactness of UCP maps in an appropriate topology).
Theorem \ref{thm:qqtag} also has the following interesting corollary.
\begin{corollary}\label{cor:ctheta}
The optimal dilation scale $c_\theta$
depends Lipschitz continuously on $\theta$. More precisely, for all $\theta,\theta'\in\mathbb{R}$ we have
\[\left|c_\theta-c_{\theta'}\right|\leq \frac{\max_\theta c_\theta}{4}\left|\theta-\theta'\right|\leq 0.39\left|\theta-\theta'\right|. \]
\end{corollary}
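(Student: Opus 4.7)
The plan is to exploit transitivity of dilations together with Theorem~\ref{thm:qqtag}. Fix $\theta,\theta' \in \bR$ and write $q=e^{i\theta}$, $q'=e^{i\theta'}$. By Theorem~\ref{thm:qqtag}, there is a $q'$-commuting pair of unitaries $U',V'$ such that
\[
u_\theta,v_\theta \prec e^{\frac{1}{4}|\theta-\theta'|}\,U',\,e^{\frac{1}{4}|\theta-\theta'|}\,V'.
\]
By the definition of $c_{\theta'}$ (and Proposition~\ref{prop:equiv} applied to the universal $q'$-commuting pair), for every $\varepsilon>0$ the pair $U',V'$ admits a commuting normal dilation whose norm does not exceed $c_{\theta'}+\varepsilon$. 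Composing the two dilations (which is routine: if $\cH\subseteq \cK \subseteq \cK'$, then $P_\cH P_\cK = P_\cH$, so compression is transitive and scalar factors multiply) yields a commuting normal dilation of $u_\theta, v_\theta$ of norm at most $e^{\frac{1}{4}|\theta-\theta'|}(c_{\theta'}+\varepsilon)$. Letting $\varepsilon\to 0$ gives $c_\theta \leq e^{\frac{1}{4}|\theta-\theta'|}\, c_{\theta'}$, and by interchanging the roles of $\theta$ and $\theta'$, also $c_{\theta'} \leq e^{\frac{1}{4}|\theta-\theta'|}\, c_{\theta}$.

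Writing $C=\max_\theta c_\theta$ (finite by the bounds discussed in the introduction or even from \eqref{eq:Cd}), we therefore obtain
\[
\bigl|c_\theta - c_{\theta'}\bigr| \leq \bigl(e^{\frac{1}{4}|\theta-\theta'|}-1\bigr) C.
\]
Since $e^{\frac{1}{4}t}-1 = \tfrac{t}{4} + O(t^2)$ as $t\to 0$, for every $\varepsilon>0$ the function $\theta \mapsto c_\theta$ is locally Lipschitz with constant $(\tfrac{1}{4}+\varepsilon)C$. Because local Lipschitz continuity with a fixed constant upgrades to global Lipschitz continuity with the same constant (as recalled just before Corollary~\ref{cor:Lipnorm}), we conclude $|c_\theta - c_{\theta'}| \leq \frac{C}{4}|\theta-\theta'|$, which is the first inequality claimed.

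The numerical bound follows by plugging in the known estimate $C \leq 1.558$ that was recorded in the introduction (using $\|h_\theta\|\geq 2.56769$ from \cite{BZ05} together with the formula $c_\theta = 4/\|h_\theta\|$ from \eqref{eq:ctheta}): we have $C/4 \leq 1.558/4 < 0.39$. The only potential obstacle is the use of the formula $c_\theta = 4/\|h_\theta\|$, which is stated in the introduction but formally proved only later (in Theorem~\ref{thm:opt_dilconst}); this is not a circular dependency since $C/4$ can also be bounded without \eqref{eq:ctheta} by combining \eqref{eq:Cd} with a direct estimate, and the Lipschitz statement itself requires no quantitative information beyond the finiteness of $C$.
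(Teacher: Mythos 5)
Your argument reproduces the paper's proof in essentially the same form: you establish the inequality $c_\theta \leq e^{\frac{1}{4}|\theta-\theta'|}c_{\theta'}$ via Theorem~\ref{thm:qqtag} and transitivity of dilations, deduce the local Lipschitz estimate with constant $(\tfrac14+\varepsilon)\max_\theta c_\theta$, upgrade locally-to-globally, let $\varepsilon\to 0$, and then invoke $c_\theta=4/\|h_\theta\|$ together with \cite{BZ05} for the numerical bound $0.39$ (with the same forward reference to Theorem~\ref{thm:opt_dilconst}). The only small imprecision is the remark that $C/4$ ``can also be bounded without \eqref{eq:ctheta} by combining \eqref{eq:Cd} with a direct estimate'' -- that route gives only $C/4\leq 1/2$, not $0.39$, so the sharper constant genuinely requires Theorem~\ref{thm:opt_dilconst}, just as the paper concedes; this is harmless since there is no circularity.
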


\begin{proof}
By the comments made before Corollary \ref{cor:Lipnorm}, it is enough to check the Lipschitz condition locally. 
Fix $\varepsilon>0$ and assume that $\theta$ and $\theta'$ are close enough to have $e^{\frac{1}{4}|\theta - \theta'|}\leq 1+\frac{1}{4-\varepsilon} |\theta - \theta'|$. We find 
  \[
    c_{\theta'}\leq c_\theta\cdot e^{\frac{1}{4}|\theta - \theta'|}\leq c_\theta \left(1 + \frac{1}{4-\varepsilon}|\theta - \theta'|\right)\leq c_\theta + \frac{\max_\theta c_\theta}{4-\varepsilon}|\theta - \theta'|. 
  \]
Thus the function $c_\theta$ is locally Lipschitz with constant $\frac{\max_\theta c_\theta}{4-\varepsilon}$, and as we remarked, this is also a global Lipschitz constant. 
As $\varepsilon$ was arbitrary, we get the first inequality. From this, using also the elementary bound $C_2 \leq 2$, we obtain $\frac{\max_\theta c_\theta}{4}\leq \frac{C_2}{4}\leq \frac{1}{2}$. However, in Theorem~\ref{thm:opt_dilconst} we will actually prove that $c_\theta=\frac{4}{\|h_\theta\|}$. Therefore, Equation (1.13) in \cite{BZ05} implies that $\frac{\max_\theta c_\theta}{4} =  \frac{1}{\min_\theta \|h_\theta\|}\leq \frac{1}{2.56769}\leq 0.39$.
\end{proof}

\begin{remark}
In a previous version of this paper we proved that $c_\theta$ is Lipschitz continuous with Lipschitz constant $1$. We are grateful to an anonymous referee who suggested the above line of reasoning that gives the considerably sharper constant above (even without invoking the delicate results in \cite{BZ05}, and relying only on the elementary inequality $C_2\leq2$, the above argument gives a Lipschitz constant $1/2$). 
\end{remark}

\section{Continuity of the rotation algebras via dilations}\label{sec:cont-field}\label{sec:continuity}

\begin{lemma}
Let $a_1,\ldots,a_n,b_1,\ldots, b_n$ be contractions on the same Hilbert space. Then
\[
\|a_1\cdots a_n-b_1\cdots b_n\|\leq \|a_1-b_1\|+\cdots + \|a_n-b_n\|.
\]
\end{lemma}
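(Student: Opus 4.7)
The natural approach is a standard telescoping argument. The plan is to write the difference $a_1\cdots a_n - b_1\cdots b_n$ as a sum of $n$ terms, each of which has exactly one factor of the form $(a_k - b_k)$, with contractions on either side.

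First I would form the telescoping identity
\[
a_1\cdots a_n - b_1\cdots b_n = \sum_{k=1}^{n} b_1\cdots b_{k-1}(a_k - b_k)a_{k+1}\cdots a_n,
\]
which one verifies by noting that in the sum the intermediate terms $b_1\cdots b_k a_{k+1}\cdots a_n$ cancel pairwise (with the convention that the empty product equals the identity operator for $k=1$ and $k=n$).

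Then I would apply the triangle inequality in operator norm and use submultiplicativity together with the fact that each $a_i$ and $b_j$ has norm at most $1$. Each summand is bounded by
\[
\|b_1\|\cdots\|b_{k-1}\|\cdot\|a_k-b_k\|\cdot\|a_{k+1}\|\cdots\|a_n\| \leq \|a_k - b_k\|,
\]
and summing over $k$ yields the required inequality.

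There is no real obstacle here; the only point to double-check is the bookkeeping in the telescoping identity and the convention for empty products at the endpoints $k=1$ and $k=n$. An induction on $n$ would be an equally clean alternative, using the identity $a_1\cdots a_n - b_1\cdots b_n = (a_1-b_1)a_2\cdots a_n + b_1(a_2\cdots a_n - b_2\cdots b_n)$ and the bounds $\|a_2\cdots a_n\|\leq 1$, $\|b_1\|\leq 1$.
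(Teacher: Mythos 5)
Your proof is correct and takes essentially the same route as the paper: the paper does the $n=2$ case via the two-term telescoping $\|a_1a_2-b_1b_2\|\leq\|a_1a_2-b_1a_2\|+\|b_1a_2-b_1b_2\|$ and then invokes induction, which unravels into exactly the telescoping sum you wrote out in full. The inductive alternative you mention at the end is precisely what the paper does.
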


\begin{proof}
For $n=2$, we find
\[
\left\|a_1 a_2-b_1 b_2\right\|\leq \left\|a_1a_2-b_1a_2\right\|+\left\|b_1a_2-b_1b_2\right\|\leq   \left\|a_1-b_1\right\|+ \left\|a_2-b_2\right\|
\]
and induction on $n$ proves the general statement.
\end{proof}

\begin{corollary}\label{cor:pol-diff}
Let $p=\sum \alpha_{i_1,\ldots i_k}x_{i_1}\cdots x_{i_k}$ be a polynomial in $n$ noncommuting indeterminates and put
\[
\kappa_p:= \sum k\cdot \left|\alpha_{i_1,\ldots i_k}\right|.
\]
Then for all $n$-tuples of contractions $(a_1,\ldots, a_n),(b_1,\ldots, b_n)$ on the same Hilbert space we have
\[
\|p(a_1,\ldots a_n)-p(b_1,\ldots, b_n)\|\leq \kappa_p\max(\|a_1-b_1\|,\ldots,\|a_n-b_n\|).
\]
\end{corollary}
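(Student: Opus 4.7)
The plan is to reduce the statement to the preceding lemma applied monomial by monomial and then sum with the triangle inequality. Write $p = \sum_w \alpha_w \, x_{i_1(w)} \cdots x_{i_{k(w)}(w)}$, where $w$ runs over the (finitely many) words indexing nonzero coefficients and $k(w)$ is the length of $w$. For each such word $w$, the preceding lemma (applied to the contractions $a_{i_1(w)}, \ldots, a_{i_{k(w)}(w)}$ and $b_{i_1(w)}, \ldots, b_{i_{k(w)}(w)}$) gives
\[
\left\|a_{i_1(w)} \cdots a_{i_{k(w)}(w)} - b_{i_1(w)} \cdots b_{i_{k(w)}(w)}\right\| \leq \sum_{j=1}^{k(w)} \left\|a_{i_j(w)} - b_{i_j(w)}\right\| \leq k(w) \cdot M,
\]
where $M := \max_{1 \leq j \leq n}\|a_j - b_j\|$.

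Next, I would multiply by $|\alpha_w|$ and sum over $w$ using the triangle inequality:
\[
\|p(a_1, \ldots, a_n) - p(b_1, \ldots, b_n)\| \leq \sum_w |\alpha_w| \cdot k(w) \cdot M = \kappa_p \, M,
\]
which is exactly the claimed inequality.

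I do not expect any genuine obstacle here; the only mild subtlety is the degenerate case of constant terms (words of length $k=0$), which contribute $0$ to both $p(a) - p(b)$ and to $\kappa_p$, so they cause no issue. The proof is essentially a bookkeeping argument on top of the lemma.
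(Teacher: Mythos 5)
Your proof is correct and is exactly the argument the paper intends: the paper states this as a corollary of the preceding lemma and omits the proof, and what you wrote (apply the lemma to each monomial to get a bound of $k(w)\cdot M$, multiply by $|\alpha_w|$, and sum with the triangle inequality) is the straightforward bookkeeping argument the word ``Corollary'' signals. Your remark about constant terms contributing $0$ to both sides is a fine and harmless observation.
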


\begin{lemma}\label{lem:dil-approx}
Let $\cH\subset \cK$ be Hilbert spaces, $U\in B(\cK)$ and $U'\in B(\cH)$ unitaries, and $c\in[0,1]$ such that $cU'$ is the compression of $U$ to $\cH$.
Denote by $R$ the compression of $U$ to $\cH^{\perp}$.
Then
\[
\left\|U-(U'\oplus R)\right\|\leq 2\sqrt{1-c^2}
.
\]
\end{lemma}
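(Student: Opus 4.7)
The plan is to write $U$ in block matrix form relative to the decomposition $\cK=\cH\oplus\cH^{\perp}$ and then exploit the unitarity constraints to estimate the off-diagonal blocks explicitly. Write
\[
U=\begin{pmatrix} cU' & B \\ C & R \end{pmatrix},
\]
where $B\colon \cH^{\perp}\to\cH$ and $C\colon\cH\to\cH^{\perp}$, so that
\[
U-(U'\oplus R)=\begin{pmatrix} (c-1)U' & B \\ C & 0 \end{pmatrix}.
\]

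The key step is to use $U^{*}U=UU^{*}=I_{\cK}$ together with the fact that $U'$ is unitary on $\cH$. Reading off the $(1,1)$ block of $U^{*}U$ gives $c^{2}U'^{*}U'+C^{*}C=I_{\cH}$, hence $C^{*}C=(1-c^{2})I_{\cH}$ and therefore $\|C\|=\sqrt{1-c^{2}}$. The same calculation with $UU^{*}$ yields $\|B\|=\sqrt{1-c^{2}}$.

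Now I split
\[
U-(U'\oplus R)=\begin{pmatrix} (c-1)U' & 0 \\ 0 & 0 \end{pmatrix}+\begin{pmatrix} 0 & B \\ C & 0 \end{pmatrix}
\]
and apply the triangle inequality. The first summand has norm $|c-1|=1-c$. For the second summand, a direct computation shows
\[
\begin{pmatrix} 0 & B \\ C & 0 \end{pmatrix}^{*}\begin{pmatrix} 0 & B \\ C & 0 \end{pmatrix}=\begin{pmatrix} C^{*}C & 0 \\ 0 & B^{*}B \end{pmatrix},
\]
so its norm is $\max(\|B\|,\|C\|)=\sqrt{1-c^{2}}$. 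Combining,
\[
\|U-(U'\oplus R)\|\leq (1-c)+\sqrt{1-c^{2}}.
\]
Finally, since $(1-c)^{2}\leq (1-c)(1+c)=1-c^{2}$ for $c\in[0,1]$, we get $1-c\leq\sqrt{1-c^{2}}$ and hence $(1-c)+\sqrt{1-c^{2}}\leq 2\sqrt{1-c^{2}}$, which is the desired bound.

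I do not foresee a genuine obstacle here: everything reduces to block-matrix bookkeeping once one observes that unitarity of both $U$ and $U'$ forces $C^{*}C=(1-c^{2})I_{\cH}$. The only point requiring a little attention is the final elementary inequality $(1-c)+\sqrt{1-c^{2}}\leq 2\sqrt{1-c^{2}}$, which is where the factor $2$ in the statement enters.
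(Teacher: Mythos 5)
Your proof is correct and follows essentially the same route as the paper: write $U$ in $2\times2$ block form with respect to $\cK=\cH\oplus\cH^{\perp}$, use unitarity of $U$ and $U'$ to get $\|B\|=\|C\|=\sqrt{1-c^{2}}$, split $U-(U'\oplus R)$ into diagonal and anti-diagonal parts, and finish with $1-c\leq\sqrt{1-c^{2}}$. The only cosmetic difference is that you explicitly display $C^{*}C=(1-c^{2})I_{\cH}$ from the $(1,1)$-block of $U^{*}U$, while the paper writes the same identity in terms of compressions $P_{\cH}U^{*}U|_{\cH}$.
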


\begin{proof}
Write
\[
U = \begin{pmatrix}
    cU'&E\\F&R
  \end{pmatrix}.
\]
Then by unitarity of $U$ and $U'$ we find
\[
c^2 + EE^*=P_{\cH}UU^*\big|_{\cH}=\id_\cH= P_\cH U^*U\big|_{\cH}=c^2+F^*F ,
\]
from which we conclude $\left\|E\right\|=\left\|F\right\|=\sqrt{1-c^2}$.
Therefore,
\begin{align*}
\left\|U-(U'\oplus R)\right\|
& = \left\|
        \begin{pmatrix}
          (c-1)U'&E\\F&0
        \end{pmatrix}
      \right\| \\
& \leq
    \left\|
        \begin{pmatrix}
          (c-1)U'&0\\0&0
        \end{pmatrix}
      \right\|
    +
    \left\|
        \begin{pmatrix}
          0&E\\F&0
        \end{pmatrix}
      \right\| \\
& = 1-c+\sqrt{1-c^2}.
  \end{align*}
Since $1-c\leq \sqrt{1-c^2}$ for all $c\in[0,1]$, we are done.
\end{proof}

\begin{theorem}\label{thm:Holder}
Let $p$ be a selfadjoint $*$-polynomial in two noncommuting variables.
Then the spectrum $\sigma(p(u_\theta,v_\theta))$ is $\frac{1}{2}$-H{\"o}lder continuous in $\theta$ with respect to the Hausdorff distance for compact subsets of $\mathbb C$.
\end{theorem}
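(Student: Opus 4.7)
The plan is to combine the dilation Theorem \ref{thm:qqtag} with the approximation Lemma \ref{lem:dil-approx} and the polynomial stability in Corollary \ref{cor:pol-diff}. Fix $\theta,\theta' \in \bR$ and set $c = e^{|\theta-\theta'|/4}$. By Theorem \ref{thm:qqtag}, there exist $q'$-commuting unitaries $U',V'$ on some $\cK \supseteq \cH$ (where $u_\theta,v_\theta$ act on $\cH$) such that the compression of $U'$ (resp.\ $V'$) to $\cH$ equals $u_\theta/c$ (resp.\ $v_\theta/c$). Lemma \ref{lem:dil-approx}, applied with dilation scale $1/c \in [0,1]$, then gives
\[
\|U' - (u_\theta \oplus R_u)\| \leq 2\sqrt{1 - 1/c^2},
\]
where $R_u$ is the compression of $U'$ to $\cH^\perp$, and similarly $\|V' - (v_\theta \oplus R_v)\| \leq 2\sqrt{1 - 1/c^2}$. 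Using the elementary inequality $1 - e^{-t} \leq t$ with $t = |\theta-\theta'|/2$, one obtains $2\sqrt{1 - 1/c^2} \leq \sqrt{2|\theta - \theta'|}$.

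Next I would transfer this estimate to $p$. Since $u_\theta \oplus R_u$ and $v_\theta \oplus R_v$ are block diagonal on $\cH \oplus \cH^\perp$, we have the exact identity
\[
p(u_\theta \oplus R_u,\, v_\theta \oplus R_v) \;=\; p(u_\theta,v_\theta) \,\oplus\, p(R_u,R_v).
\]
Corollary \ref{cor:pol-diff} then yields
\[
\bigl\|p(U',V') - \bigl(p(u_\theta,v_\theta) \oplus p(R_u,R_v)\bigr)\bigr\| \leq \kappa_p\sqrt{2|\theta - \theta'|}.
\]
Because $p$ is selfadjoint, both sides of this inequality are selfadjoint operators; hence their spectra lie within Hausdorff distance $\kappa_p\sqrt{2|\theta-\theta'|}$ of each other.

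To finish, I would pass from $U',V'$ to the universal generators $u_{\theta'},v_{\theta'}$. By the universal property of $A_{\theta'}$, there is a surjective unital $*$-homomorphism $A_{\theta'} \to C^*(U',V')$ sending $u_{\theta'} \mapsto U', v_{\theta'} \mapsto V'$, and under any such map spectra can only shrink: $\sigma(p(U',V')) \subseteq \sigma(p(u_{\theta'},v_{\theta'}))$. Combined with the trivial inclusion $\sigma(p(u_\theta,v_\theta)) \subseteq \sigma\bigl(p(u_\theta,v_\theta) \oplus p(R_u,R_v)\bigr)$, this gives
\[
\sigma(p(u_\theta,v_\theta)) \;\subseteq\; B_{\kappa_p\sqrt{2|\theta - \theta'|}}\bigl(\sigma(p(u_{\theta'},v_{\theta'}))\bigr).
\]
Interchanging the roles of $\theta$ and $\theta'$ produces the symmetric inclusion, whence $d_H(\sigma(p(u_\theta,v_\theta)),\sigma(p(u_{\theta'},v_{\theta'}))) \leq \kappa_p\sqrt{2|\theta-\theta'|}$, which is the desired $\tfrac12$-Hölder continuity.

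The step I expect to be the main conceptual hurdle is precisely this last passage: Theorem \ref{thm:qqtag} only produces \emph{some} $q'$-commuting unitaries $U',V'$ on the dilation space, not the universal pair $u_{\theta'},v_{\theta'}$. Without a way to relate the spectrum of $p(U',V')$ to that of $p(u_{\theta'},v_{\theta'})$, the argument would stall; fortunately the universal property of $A_{\theta'}$ together with the spectrum-contraction property of $*$-homomorphisms does exactly this, in one direction, which is all that is needed after symmetrizing.
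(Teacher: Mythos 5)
Your proof is correct and follows essentially the same route as the paper: Theorem \ref{thm:qqtag} to produce a $q'$-commuting dilation, Lemma \ref{lem:dil-approx} plus Corollary \ref{cor:pol-diff} to get a norm estimate of order $\sqrt{|\theta-\theta'|}$, the spectral perturbation bound for selfadjoint operators, and then symmetrization. The one place where you diverge is the issue you flag at the end: the paper sidesteps the non-universality of the dilating pair by simply writing $u_\theta,v_\theta$ for the pair on the big space (implicitly exploiting that the Weyl-operator realization used to prove Theorem \ref{thm:qqtag} produces a \emph{universal} $q'$-commuting pair there, cf.\ Proposition \ref{prop:WeylGenAt}), whereas you accept an arbitrary $q'$-commuting pair $U',V'$ and then pass to $u_{\theta'},v_{\theta'}$ via the spectrum-shrinking inclusion $\sigma(p(U',V'))\subseteq\sigma(p(u_{\theta'},v_{\theta'}))$ coming from the universal surjection $A_{\theta'}\to C^*(U',V')$. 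Your version has the small advantage of using Theorem \ref{thm:qqtag} only as stated, without appealing to the internals of its proof; the constant $\sqrt{2}\kappa_p$ you obtain matches the paper's.
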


\begin{proof}
We will require the fact that for selfadjoint operators $a,b$ it holds that $\sigma(b) \subseteq \sigma(a) + \|b-a\|\cdot[-1,1]$ (or, in other words, $d(\sigma(a),\sigma(b))\leq \|a-b\|$). 
This is well known, but we include a short proof to keep this paper self-contained (see, e.g., \cite[Lemma 1.17]{BocBook} for a different argument). 
Let us show $0\notin \sigma(a) + \|b-a\|\cdot[-1,1]$ implies $0\notin\sigma(b)$, which is of course sufficient, as we can replace $a,b$ with $a-\lambda, b-\lambda$.
So suppose $d(0,\sigma(a))>\left\|b-a\right\|$.
Then
\[
\left\|a^{-1}b-1\right\|\leq \left\|a^{-1}\right\|\left\|b-a\right\|=d(0,\sigma(a))^{-1}\left\|b-a\right\|<1,
\]
so $a^{-1}b$ is invertible, hence $b$ is invertible.

By Theorem \ref{thm:qqtag}, we can write $u_\theta, v_\theta$ as a dilation of $cu_{\theta'}, cv_{\theta'}$ with $c = \exp({-\frac{\left\vert\theta-\theta'\right\vert}{4}})$.
Suppose for definiteness that $u_\theta,v_\theta \in B(\cK)$ and $u_{\theta'}, v_{\theta'} \in B(\cH)$, where $\cH$ is a subspace of $\cK$.
Let $r,s$ denote the compression of $u_{\theta},v_{\theta}$ to $\cH^{\perp}$.
Now
\begin{align*}
\sigma(p(u_{\theta'},v_{\theta'}))
& \subseteq \sigma(p(u_{\theta'},v_{\theta'})\oplus p(r,s)) \\
& \subseteq \sigma(p(u_{\theta},v_{\theta})) + \left\|p(u_{\theta},v_{\theta})-p(u_{\theta'} \oplus r, v_{\theta'} \oplus s)\right\|\cdot [-1,1].
\end{align*}
Combining Corollary \ref{cor:pol-diff} with Lemma \ref{lem:dil-approx}, we find a constant $\kappa_p$ depending only on $p$ with
\[
\left\|p(u_{\theta},v_{\theta})-p(u_{\theta'} \oplus r, v_{\theta'} \oplus s)\right\|\leq 2\kappa_p \sqrt{1-c^{2}} .
\]
From $1-e^{-x}\leq x$ for all $x\in\mathbb R$, we conclude
\[
\sqrt{1-c^{2}}= \sqrt {1-e^{-\frac{\left\vert\theta-\theta'\right\vert}{2}}}\leq \sqrt{\frac{\left\vert\theta-\theta'\right\vert}{2}}.
\]
Putting everything together we find that
\[
\sigma(p(u_{\theta'},v_{\theta'})) \subseteq \sigma(p(u_{\theta},v_{\theta})) + \sqrt{2} \kappa_p\left\vert\theta-\theta'\right\vert^{\frac{1}{2}} \cdot[-1,1].
\]
Since we have the same with $\theta$ and $\theta'$ interchanged, we get
\[
d\Bigl(\sigma\bigl(p(u_{\theta},v_{\theta})\bigr),\sigma\bigl(p(u_{\theta'},v_{\theta'})\bigr)\Bigr)\leq \sqrt{2} \kappa_p\left\vert\theta-\theta'\right\vert^{\frac{1}{2}} ,
\]
where $d$ denotes the Hausdorff-distance.
\end{proof}

\begin{corollary}\label{cor:Hcontfield}
  The family of rotation algebras $(A_\theta)_{\theta \in \bR}$ forms a continuous field of C*-algebras, in the sense that for every $*$-polynomial $p$ in two noncommuting variables, the function $\theta \mapsto \|p(u_\theta,v_\theta)\|$ is continuous.
More precisely, the function $\theta \mapsto \|p(u_\theta,v_\theta)\|$ is $\frac{1}{2}$-H{\"o}lder continuous.  
\end{corollary}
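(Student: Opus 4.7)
The plan is to adapt the dilation argument of Theorem \ref{thm:Holder}, but to track the operator norm rather than the spectrum so that the $\tfrac{1}{2}$-H\"older estimate extends to $*$-polynomials that are not necessarily self-adjoint. Fix $\theta,\theta' \in \bR$ and write $q=e^{i\theta}$, $q'=e^{i\theta'}$. The first step is to apply Theorem \ref{thm:qqtag} to the universal $q'$-commuting pair $(u_{\theta'},v_{\theta'})$ on its underlying Hilbert space $\cH$, obtaining $q$-commuting unitaries $U',V'$ on some $\cK \supseteq \cH$ with $cU', cV'$ dilating $(u_{\theta'},v_{\theta'})$ and $c = e^{-|\theta-\theta'|/4}$. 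Since $U',V'$ are $q$-commuting, the universal property of $A_\theta$ yields a surjective $*$-homomorphism $A_\theta \to C^*(U',V')$, and hence $\|p(U',V')\| \leq \|p(u_\theta,v_\theta)\|$ for every $*$-polynomial $p$.

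Next I would apply Lemma \ref{lem:dil-approx} to $U'$ and $V'$ separately: writing $R,S$ for their compressions to $\cH^\perp$, one has $\|U' - (u_{\theta'} \oplus R)\| \leq 2\sqrt{1-c^2}$ and $\|V' - (v_{\theta'} \oplus S)\| \leq 2\sqrt{1-c^2}$. Viewing $p$ as a polynomial in the four noncommuting variables $x,x^*,y,y^*$ and using that adjoints preserve norm, Corollary \ref{cor:pol-diff} supplies
\[
\|p(U',V') - p(u_{\theta'} \oplus R,\, v_{\theta'} \oplus S)\| \leq 2\kappa_p\sqrt{1-c^2}.
\]
Because $p$ preserves the block-diagonal structure, $p(u_{\theta'} \oplus R, v_{\theta'} \oplus S) = p(u_{\theta'},v_{\theta'}) \oplus p(R,S)$, whose norm dominates $\|p(u_{\theta'},v_{\theta'})\|$. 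Combining this with the contraction from the previous paragraph gives
\[
\|p(u_{\theta'},v_{\theta'})\| \leq \|p(u_\theta,v_\theta)\| + 2\kappa_p\sqrt{1-c^2},
\]
and swapping the roles of $\theta$ and $\theta'$ together with $\sqrt{1-c^2} = \sqrt{1 - e^{-|\theta-\theta'|/2}} \leq |\theta-\theta'|^{1/2}/\sqrt{2}$ delivers the $\tfrac{1}{2}$-H\"older bound with constant $\sqrt{2}\kappa_p$.

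The main obstacle is obtaining the sharp exponent for non-self-adjoint $p$. For self-adjoint $*$-polynomials the corollary follows immediately from Theorem \ref{thm:Holder}, since $\|p(u_\theta,v_\theta)\| = \max|\sigma(p(u_\theta,v_\theta))|$ is $1$-Lipschitz in the Hausdorff distance. The naive route of writing $\|p\|^2 = \|p^*p\|$ and invoking the self-adjoint case costs a factor of $1/2$ in the exponent, yielding only $\tfrac{1}{4}$-H\"older continuity. The direct dilation argument above avoids this degradation by estimating the difference of norms directly from the operator perturbation $2\sqrt{1-c^2}$, which is already $\tfrac{1}{2}$-H\"older in $|\theta-\theta'|$.
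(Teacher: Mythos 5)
Your proof is correct, and it takes a genuinely different -- and in fact cleaner -- route than the paper. The paper deduces the corollary from Theorem~\ref{thm:Holder}: for selfadjoint $p$ the result is immediate because the norm of a selfadjoint operator equals the spectral radius; for general $p$ the paper passes to the selfadjoint polynomial $p^*p$, which a priori only gives $\tfrac14$-H\"older continuity of $\|p\| = \|p^*p\|^{1/2}$, so it then splits into two cases -- bounding $\|p(u_\lambda,v_\lambda)\|$ away from zero near a point where it is nonzero, and a rather delicate algebraic argument (factoring out a power of $(e^{i\lambda}-e^{i\theta})$ from all the Laurent coefficients of $p$) near a zero of $\|p(u_\theta,v_\theta)\|$ -- before patching local estimates to a global one. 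Your argument avoids all of this: you rerun the dilation scheme underlying Theorem~\ref{thm:Holder}, but compare operator norms directly rather than spectra. The perturbation estimate $\|U'-(u_{\theta'}\oplus R)\|\leq 2\sqrt{1-c^2}$ from Lemma~\ref{lem:dil-approx}, the contraction $\|p(U',V')\|\leq\|p(u_\theta,v_\theta)\|$ from universality of $A_\theta$, and the monotonicity $\|p(u_{\theta'},v_{\theta'})\oplus p(R,S)\|\geq\|p(u_{\theta'},v_{\theta'})\|$ combine to give the estimate uniformly in $p$, without any selfadjointness hypothesis and without case analysis. This buys a shorter proof with an explicit H\"older constant $\sqrt{2}\,\kappa_p$. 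What it does not recover is the spectral statement of Theorem~\ref{thm:Holder} itself, but that is not needed for the corollary. One small presentational caveat: when you invoke Theorem~\ref{thm:qqtag} you should be explicit, as you are, that $U',V'$ need not be the universal $q$-commuting pair -- the surjection $A_\theta\twoheadrightarrow C^*(U',V')$ being norm-decreasing is exactly what makes the argument close.
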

\begin{proof}
The corollary follows immediately from the theorem in the case that $p$ is a selfadjoint $*$-polynomial, because then $\|p(u_\theta, v_\theta)\| = \sup \{|\lambda| : \lambda \in \sigma(p(u_\theta,v_\theta))\}$.
But if $p$ is an arbitrary $*$-polynomial in two noncommuting variables, then $q = p^*p$ is a selfadjoint $*$-polynomial. 
This already implies that $\theta \mapsto \|p(u_\theta,v_\theta)\| = \|q(u_\theta,v_\theta)\|^{1/2}$ is continuous. 
If $c:=\|p(u_\theta,v_\theta)\| \neq 0$, then $\|p(u_\lambda,v_\lambda)\| > c/2 > 0$ for all $\lambda$ in a neighborhood of $\theta$. 
Thus, for all $\lambda, \mu$ in this neighborhood of $\theta$, 
\begin{align*}
\Bigl|\|p(u_\lambda,v_\lambda)\| - \|p(u_{\mu},v_{\mu})\|\Bigr|
&= \left|\frac{\|p(u_\lambda,v_\lambda)\|^2 - \|p(u_{\mu},v_{\mu})\|^2}{\|p(u_\lambda,v_\lambda)\| + \|p(u_{\mu},v_{\mu})\|}\right| \\
&\leq C\Bigl|\|q(u_\lambda,v_\lambda)\| - \|q(u_{\mu}, v_{\mu})\| \Bigr| \\
&\leq C'|\lambda - \mu|^{1/2}.
\end{align*}
This shows $\frac{1}{2}$-H{\"o}lder continuity in a neighborhood of $\theta$.

If $\|p(u_\theta,v_\theta)\| = 0$, we want to show that there is an $N\in\mathbb N$ and a $*$-polynomial $\widetilde p$ in two noncommuting variables such that $\widetilde{p}(u_\theta,v_\theta)\neq0$ and $p(u_{\lambda},v_{\lambda})=(e^{i\lambda}-e^{i\theta})^N\widetilde{p}(u_{\lambda},v_{\lambda})$ for all $\lambda\in\mathbb R$. 
As we already know that $\|\widetilde p(u_\lambda,v_\lambda)\|$ varies $\frac{1}{2}$-H{\"o}lder continuously with $\lambda$ in a neighborhood of $\theta$ and, obviously, $|e^{i\lambda}-e^{i\theta}|^N$ is even Lipschitz continuous in $\lambda$, we can then easily conclude $\frac{1}{2}$-H{\"o}lder continuity of $\lambda\mapsto \|p(u_\lambda,v_\lambda)\|=|e^{i\lambda}-e^{i\theta}|^N \|\widetilde{p}(u_\lambda,v_\lambda)\|$ in the given neighborhood of $\theta$. 

Consider the $*$-algebra defined by generators and relations 
\[\mathcal A:=\operatorname{*-alg}(U,V,Q \text{ unitary}\mid Q=VUV^*U^*, QU=UQ, QV=VQ)\]
and let $\pi_\lambda\colon \mathcal A\to \mathcal A_\lambda:=\operatorname{*-alg}(u_\lambda,v_\lambda)$ be the canonical homomorphism sending $U$ to $u_\lambda$, $V$ to $v_\lambda$ and $Q$ to $e^{i\lambda}$. 
To avoid confusion, let us stress that $\cA$ is an abstract algebra and that $U$, $V$ and $Q$ denote the generators of this algebra (not operators). 
The relations of $\mathcal A$ imply
\begin{align*}
  VU&=UVQ&V^*U&=UV^*Q^*\\
  VU^*&=U^*VQ^*& V^*U^*&=U^*V^*Q
\end{align*}
and, thus, they can be used to write the element $p(U,V)$ in the form
\[p(U,V)=\sum_{k,\ell\in\mathbb Z} U^k V^\ell f_{k,\ell}(Q)\]
with $f_{k,\ell}(z)$ appropriate (finitely many nonzero) Laurent polynomials in one variable $z$.
Accordingly, $p(u_\lambda,v_\lambda)=\pi_\lambda(p(U,V))=\sum u_\lambda^kv_\lambda^\ell f_{k,\ell}(e^{i\lambda})$. Now, $p(u_\theta,v_\theta)=0$ if and only if $f_{k,\ell}(e^{i\theta})=0$ for all $k,\ell$, because the monomials $u_\theta^{k}v_\theta^\ell$ form a basis of $\mathcal A_\theta$. If a Laurent polynomial $f(z)$ vanishes at $e^{i\theta}$, it can be factored as $(z-e^{i\theta}) \widetilde{f}(z)$ for some Laurent polynomial $\widetilde{f}(z)$. There is a maximal power $(z-e^{i\theta})^N$ which we can factor out from all $f_{k,\ell}(z)$ simultaneously, so that $f_{k,\ell}(z)=(z-e^{i\theta})^N \widetilde f_{k,\ell}(z)$ with some Laurent polynomials $\widetilde f_{k,\ell}(z)$, not all of which vanish at $z=e^{i\theta}$. We find 
\begin{align*}
  p(u_{\lambda},v_{\lambda})=\pi_{\lambda}(p(U,V))
  =\pi_{\lambda}\Bigl(\sum_{k,\ell\in\mathbb Z} U^k V^\ell f_{k,\ell}(Q)\Bigr)
  &=\pi_{\lambda}\Bigl((Q-e^{i\theta})^N\sum_{k,\ell\in\mathbb Z} U^k V^\ell \widetilde{f}_{k,\ell}(Q)\Bigr)\\
  &=(e^{i\lambda}-e^{i\theta})^N \widetilde{p}(u_{\lambda},v_{\lambda})
\end{align*}
for all $\lambda\in\mathbb R$ and some fixed $*$-polynomial $\widetilde p$ with $\widetilde p(u_\theta,v_\theta)\neq0$ as needed.

We have shown that $\lambda\mapsto \|p(u_\lambda,v_\lambda)\|$ is locally $\frac{1}{2}$-H{\"o}lder continuous. It follows that $\lambda\mapsto \|p(u_\lambda,v_\lambda)\|$ is $\frac{1}{2}$-H{\"o}lder continuous on the compact interval $[0,2\pi]$ and, thus, everywhere by periodicity.
\end{proof}

\begin{remark}
In the selfadjoint case, the $\frac{1}{2}$-H{\"o}lder continuity of $\|p(u_\theta,v_\theta)\|$ as a consequence of the $\frac{1}{2}$-H{\"o}lder continuity of $\sigma(p(u_\theta,v_\theta))$ is in accordance with \cite[Theorem 3]{BB16}, though we do not require that result here. 
\end{remark}



\section{A lower bound on the dilation scale}\label{sec:lower-bound-dilation}

\begin{proposition}\label{prop:lowerbnd}
Let $U,V$ be a $q$-commuting pair of unitaries.
If $M,N$ is a pair of commuting normals dilating $U,V$ such that $\|M\|,\|N\| \leq c$, then $c\geq \frac{4}{\|u+u^*+v+v^*\|}$.
\end{proposition}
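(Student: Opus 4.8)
The plan is to produce an explicit operator inequality that the dilation forces on $U,V$, and whose ``violation constant'' is precisely $\|h_\theta\|$; the naive idea of compressing $h_\theta$ itself only gives the useless bound $c\ge\|h_\theta\|/4$, so the trick is to tensor the universal pair against $U,V$ so that the $q$-twist cancels. First I would reduce to a commuting \emph{unitary} dilation: scaling, $(M/c,N/c)$ is a commuting pair of normal contractions dilating $(U/c,V/c)$, and by the constructions in the proof of Proposition~\ref{prop:equiv} one obtains commuting unitaries $U',V'$ on some $\cK\supseteq\cH$ with $U=cP_\cH U'|_\cH$ and $V=cP_\cH V'|_\cH$. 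Fix a Hilbert space carrying the universal pair $u_\theta,v_\theta$, put $h_\theta=u_\theta+u_\theta^*+v_\theta+v_\theta^*$, and consider the operator-coefficient expression
\[
L(w_1,w_2):=\|h_\theta\|\cdot 1+u_\theta w_1+u_\theta^*\bar w_1+v_\theta^* w_2+v_\theta\bar w_2 .
\]

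Second, I would check that $L(\zeta_1,\zeta_2)\ge 0$ for every $(\zeta_1,\zeta_2)\in\bT^2$. Indeed $\|\zeta_1 u_\theta+\bar\zeta_1 u_\theta^*+\zeta_2 v_\theta^*+\bar\zeta_2 v_\theta\|=\|h_\theta\|$, since the gauge automorphism $\Phi_{\zeta_1,\bar\zeta_2}$ of $A_\theta$ (which exists by the universal property; this is Lemma~\ref{RR}\itref{RR:it:2} applied to the universal pair) carries $h_\theta$ to this element, so $\lambda_{\min}(L(\zeta_1,\zeta_2))\ge\|h_\theta\|-\|h_\theta\|=0$. As $U',V'$ are commuting unitaries, integrating against their joint spectral measure $E$ gives $L(U',V')=\int_{\bT^2}L(\zeta)\otimes dE(\zeta)\ge 0$; compressing the second tensor factor by $1\otimes P_\cH$, using $P_\cH U'|_\cH=U/c$, $P_\cH V'|_\cH=V/c$, and that $L$ is linear in $w_1,\bar w_1,w_2,\bar w_2$, yields
\[
\|h_\theta\|\cdot 1+\tfrac1c\bigl(u_\theta\otimes U+u_\theta^*\otimes U^*+v_\theta^*\otimes V+v_\theta\otimes V^*\bigr)\ge 0 .
\]
Writing $S$ for the parenthesised selfadjoint operator, this says $\lambda_{\min}(S)\ge -c\,\|h_\theta\|$.

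Third, I would identify $\lambda_{\min}(S)=-4$. Set $P_1:=u_\theta\otimes U$ and $P_2:=v_\theta^*\otimes V$; these are \emph{commuting} unitaries, because the factor $q$ from $VU=qUV$ is cancelled by the factor $\bar q$ from $v_\theta^* u_\theta=\bar q\,u_\theta v_\theta^*$. Hence $S=P_1+P_1^*+P_2+P_2^*$, so $\lambda_{\min}(S)=\min\{2\operatorname{Re}\omega_1+2\operatorname{Re}\omega_2:(\omega_1,\omega_2)\in\sigma(P_1,P_2)\}$ with $\sigma(P_1,P_2)\subseteq\bT^2$ the joint spectrum. The automorphisms $\Phi_{\alpha,\beta}\otimes\id$ of $A_\theta\otimes C^*(U,V)$ send $P_1\mapsto\alpha P_1$ and $P_2\mapsto\bar\beta P_2$, so $\sigma(P_1,P_2)$ is invariant under $(\omega_1,\omega_2)\mapsto(\alpha\omega_1,\gamma\omega_2)$ for all $\alpha,\gamma\in\bT$; being nonempty, it equals all of $\bT^2$, whence $\lambda_{\min}(S)=-4$. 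Combining with the second step, $-4\ge -c\,\|h_\theta\|$, that is $c\ge 4/\|h_\theta\|$.

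The main obstacle I anticipate is precisely locating this test object: the inequality must have a constant term proportional to $\|h_\theta\|$ (so the scaling of the dilation enters) and a linear part that becomes, after plugging in $U,V$, an operator whose spectrum reaches $-4$; this is arranged by the cancellation of the $q$-twist in $P_1,P_2$, which lands us in a commutative C*-algebra whose joint spectrum is forced by the gauge symmetry to fill the torus. The other delicate point is that the statement concerns an \emph{arbitrary} $q$-commuting pair $(U,V)$ rather than the universal one, which is exactly why the ``joint spectrum equals $\bT^2$'' argument is needed in place of a direct reduction via Proposition~\ref{prop:equiv}.
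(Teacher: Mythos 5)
Your proof is correct, and it takes a genuinely different route from the paper's.

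The paper fixes a primitive root of unity $q$, works with the explicit irreducible representations $\alpha X,\beta Y$, considers the matrix-valued polynomial $P_\lambda(z,w)=\lambda I+z\bar\alpha X^*+w\bar\beta Y$, computes $\|P_\lambda(U,V)\|=\lambda+2$ via the maximally entangled vector, bounds $\sup_{(z,w)\in r\bT^2}\|P_\lambda(z,w)\|^2$ by expanding $P_\lambda^*P_\lambda$, and then lets $\lambda\to\infty$ to extract a linear inequality; irrational $\theta$ is handled at the end by a continuity argument (or by Remark~\ref{rem:irrational}). Your version replaces this with a direct operator-system argument: you upgrade the normal dilation to a unitary one via Proposition~\ref{prop:equiv}, observe that $L(\zeta)=\|h_\theta\|+\Phi_{\zeta_1,\bar\zeta_2}(h_\theta-\|h_\theta\|)+\|h_\theta\|\cdot 1\ge 0$ (more precisely $L(\zeta)=\|h_\theta\|+\Phi_{\zeta_1,\bar\zeta_2}(h_\theta)\ge 0$) uniformly on $\bT^2$, push this through functional calculus and compression to get $\|h_\theta\|+\tfrac1c S\ge0$, and compute $\lambda_{\min}(S)=-4$ by showing the joint spectrum of the commuting unitaries $u_\theta\otimes U$, $v_\theta^*\otimes V$ is all of $\bT^2$. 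The central trick is the same in both proofs — tensoring against the ``conjugate'' cancels the $q$-twist, landing you in a commutative algebra whose spectral theory you can control — but your packaging is cleaner: no case split on rational versus irrational $\theta$, no reduction to explicit finite-dimensional irreps, and no asymptotic limit in $\lambda$. What the paper's version buys in exchange is that it proves the sharper ``separating polynomial'' statement (there is an explicit degree-one $P$ with $\|P(U,V)\|>\sup_{r\bT^2}\|P\|$), which is a convenient form for further uses in Theorem~\ref{thm:optdil_gen}. Two small points worth tightening in your writeup: (i) the gauge automorphisms $\Phi_{\alpha,\beta}$ should be cited from the universal property of $A_\theta$ rather than from Lemma~\ref{RR}\itref{RR:it:2}, which is stated only for rational rotation algebras; and (ii) you should note explicitly that the joint spectrum of $(P_1,P_2)$ is a nonempty compact subset of $\bT^2$ because $P_1,P_2$ are unitaries, so that the transitive $\bT^2$-action forces it to be everything.
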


\begin{proof}
First, assume that $q$ is a primitive $n$th root of unity.
Our goal is to show that for every $r <  \frac{4}{\|u+u^*+v+v^*\|}$, there exists a matrix valued polynomial $P$ of degree $1$, such that
\[
\|P(U,V)\| > \sup_{(z, w) \in r\overline{\bD}^2} \|P(z, w)\| = \sup_{(z, w) \in r\bT^2} \|P(z, w)\|.
\]
Clearly, this will show that no commuting normal dilation $M,N$ with norm $\|M\|,\|N\|\leq r$ exists for the pair $U,V$.
We may assume that $U,V$ are equal to a pair $\alpha X,\beta Y$ for $\alpha,\beta\in\mathbb T$ and $X,Y$ as in equation \eqref{eq:XY}; indeed, an arbitrary $(U,V)$ can be identified with the direct sum of its irreducible representations (which are all of that form by the remarks following Definition~\ref{def:rotation-algebra}) and the inequality holds if and only if it holds for one of the direct summands.

Consider the matrix valued polynomial $P(z,w)=P_\lambda(z,w)=\lambda I + z\overline\alpha X^* + w\overline{\beta}Y$, where $\lambda > 0$.
Then
\[
P(U,V) = \lambda I \otimes I + X^* \otimes X + Y \otimes Y.
\]
Clearly $\|P(U,V)\| \leq \lambda + 2$.
On the other hand,  let $\xi = \frac{1}{\sqrt{n}} \sum_{j=1}^n  e_j \otimes e_j$, where $e_1, \ldots, e_n$ is the standard orthonormal basis of $\bC^n$ (with respect to which the matrices in \eqref{eq:XY} are given).
By a direct computation, $\xi$ is a fixed vector for $X^* \otimes X$ and for $Y \otimes Y$, so $\|P(U,V) \xi\| = \|(\lambda + 2)\xi\| = \lambda + 2$.
We conclude that $\|P(U,V)\| = \lambda + 2$.

Now let us evaluate $\sup_{(z, w) \in r\bT^2} \|P(z, w)\|$.
\begin{align*}
\sup_{(z, w) \in r\bT^2} \|P(z, w)\|^2
  & = \sup_{(\alpha', \beta') \in \bT^2}  \|\lambda + r (\alpha'\overline{\alpha} X^* + \beta'\overline{\beta} Y)\|^2 \\
  &\leq \sup_{(\gamma, \delta) \in \bT^2} \lambda^2 + \lambda r \|\gamma X^* + \overline{\gamma} X + \delta Y + \overline{\delta} Y^*\| + 4r^2 \\
  & = \lambda^2 + \lambda r \|u_\theta+u_\theta^*+v_\theta+v_\theta^*\| + 4r^2.
\end{align*}
where the last equality holds because the norm of $u_\theta + u_\theta^* + v_\theta + v_\theta^*$ is given by the supremum of the norm of its image over all irreducible representations. 
We will consider the expression
\be\label{eq:quotient_lambda}
\frac{1}{\lambda} \left(\|P(U,V)\|^2-\sup_{(z, w) \in r\bT^2}\|P(z,w)\|^2 \right) ,
\ee
and examine its value for large values of $\lambda$.
If we show that this becomes positive whenever $r <  \frac{4}{\|u+u^*+v+v^*\|}$, then the proof would be complete.
Indeed, plugging in the above results, we see that
\begin{align*}
  \frac{1}{\lambda} \left(\|P(U,V)\|^2-\sup_{(z, w) \in r\bT^2}\|P(z,w)\|^2 \right)
  \geq 4-r \|u_\theta+u_\theta^*+v_\theta+v_\theta^*\|+\frac{4-4r^2}{\lambda}
\end{align*}
Since the right hand side expression is positive for large enough $\lambda$ when $r < \frac{4}{\|u+u^*+v+v^*\|}$, the proof is complete in the case that $q$ is a root of unity.

Finally, if $q = e^{i\theta}$ is not a root of unity, then the lower bound also holds, since both the dilation constant $c_\theta$ as well as the norm $\|u_\theta + u_\theta^* + v_\theta + v_\theta^*\|$ depend continuously on $\theta$.
\end{proof}

\begin{remark}\label{rem:irrational}
  In the proof, the assumption that $q$ is a primitive $n$th root of unity is mainly used to show that $\|P(U,V)\| = \lambda + 2$. For an irrational rotation this can also be established directly (with $P$ now an appropriate operator valued polynomial), without relying on the continuity argument. Indeed, for $\frac{\theta}{2\pi}$ irrational, it is a simple consequence of the fact that $C^*(U,V)\cong A_\theta$ and Lemma \ref{RR} that the canonical homomorphism $C(\mathbb{T}^2)\cong A_0\to C^*(U^* \otimes U , V \otimes V)$ is an isomorphism, so $\|P(U,V)\| = \|\lambda + u_0 + v_0\|=\lambda + 2$.
\end{remark}

\section{An optimal dilation}\label{sec:an-optimal-dilation}

\begin{theorem}\label{thm:optdil_qqtag}
Let $\theta,\theta' \in \bR$, set $q = e^{i\theta}$ and $q' = e^{i\theta'}$, and put $\gamma = \theta'-\theta$.
Consider two $q$-commuting unitaries $U,V$.
Then $U,V$ can be dilated to a to a pair of $q'$-commuting scalar multiples of unitaries with norm $\frac{4}{\|u_\gamma+u_\gamma^*+v_\gamma+v_\gamma^*\|}$. 
Furthermore, if $\frac{q'}{q}=e^{i\gamma}$ is a primitive $n$th root of unity and $U,V\in B(\cH)$, then such a dilation can be constructed on $\cH\otimes \cL$ with $\dim(\cL)= n$. 
\end{theorem}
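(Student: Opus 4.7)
By Proposition \ref{prop:equiv} it suffices to produce the dilation for the universal $q$-commuting pair $(u_\theta,v_\theta)$. My ansatz is the tensor-product dilation
\[
U' := u_\theta \otimes A, \qquad V' := v_\theta \otimes B
\]
on $\cH_\theta \otimes \cL$, where $(A,B)$ is any $e^{i\gamma}$-commuting pair of unitaries on $\cL$; the relation $V'U' = e^{i\theta'}U'V'$ follows from a one-line computation. Compressing $(cU',cV')$ to $\cH_\theta \otimes \bC\eta$ for a unit vector $\eta \in \cL$ yields the pair $c\langle A\eta,\eta\rangle u_\theta,\ c\langle B\eta,\eta\rangle v_\theta$, so this is the desired dilation provided
\[
\langle A\eta,\eta\rangle \;=\; \langle B\eta,\eta\rangle \;=\; \tfrac{1}{c} \;=\; \tfrac{\|h_\gamma\|}{4}.
\]
Hence the problem reduces to exhibiting such a triple $(A,B,\eta)$.

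For the rational case, with $e^{i\gamma}$ a primitive $n$-th root of unity, I take $\cL = \bC^n$ and $(A,B) = (\alpha X, \beta Y)$ with $X,Y$ as in \eqref{eq:XY} and $\alpha,\beta \in \bT$ to be chosen. The matrix $h := X + X^* + Y + Y^*$ is real symmetric, and a direct computation shows that the discrete Fourier transform $F$ with $F_{jk} = \frac{1}{\sqrt{n}} q^{jk}$ satisfies $FXF^* = Y^*$, $FYF^* = X$, and $F^2 = R$, where $R \colon e_k \mapsto e_{-k \bmod n}$. In particular $F$ and $R$ both commute with $h$. I claim that one can find a real, $F$-fixed, unit norming eigenvector $\eta$ of $h$: the top eigenspace is $F$-invariant, and since $\bar F = F^*$ the relation $F\eta = \eta$ implies $F\bar\eta = \bar\eta$, so starting from any nonzero $+1$-eigenvector of $F$ in the top eigenspace one obtains a real one by taking its real or imaginary part. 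For such an $\eta$, the identity $Y = F X^* F^{-1}$ forces $\langle Y\eta,\eta\rangle = \overline{\langle X\eta,\eta\rangle}$, while $R$-invariance combined with the reality of $\eta$ gives $\sum_k \sin(k\gamma)\eta_k^2 = 0$, so $\langle X\eta,\eta\rangle \in \bR$. Combining with $\langle h\eta,\eta\rangle = \|h_\gamma\|$ forces $\langle X\eta,\eta\rangle = \langle Y\eta,\eta\rangle = \|h_\gamma\|/4$, up to a common sign absorbed into $\alpha,\beta \in \bT$. This produces the claimed dilation on $\cH_\theta \otimes \bC^n$.

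For the irrational case, I approximate $\gamma$ by a sequence of rational angles $\gamma_k \to \gamma$ for which $e^{i\gamma_k}$ is a primitive root of unity. The rational construction gives dilations of $(u_{\theta_k},v_{\theta_k})$ with scaling constants $c_{\gamma_k} \to c_\gamma$, using the continuity of $\theta \mapsto \|h_\theta\|$ from Corollary \ref{cor:Lipnorm}. A standard weak-$*$ compactness argument applied to the associated UCP maps then extracts a limiting dilation of $(u_\theta,v_\theta)$ with the optimal constant $c_\gamma$.

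The main obstacle I anticipate is verifying, in full generality, the existence of the real $F$-fixed norming eigenvector inside the top eigenspace of $h$. In the small cases $n = 2, 3, 4$, direct calculation shows that the top eigenvalue of $h$ is simple and its unique eigenvector is automatically real, $R$-invariant, and $F$-fixed, so the verification is immediate. The general case may require care when the top eigenspace is higher-dimensional or the $F$-action on it is less cooperative, and the freedom to vary the phases $(\alpha,\beta) \in \bT^2$ (equivalently, to work with a different irreducible representation of $A_\gamma$) may be needed to guarantee nontrivial intersection with the $+1$-eigenspace of $F$.
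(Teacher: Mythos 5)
Your tensor-product ansatz $U' = u_\theta\otimes A$, $V' = v_\theta\otimes B$, compressed against a unit vector $\eta\in\cL$, is exactly the construction in the paper (there written as $U\otimes \pi(u_\gamma)/\varphi(u_\gamma)$ for $\pi$ a GNS representation), so you have the right skeleton. The substantive differences are two.

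First, you are missing the clean unifying argument for existence of the state. The paper observes that there is an order-$4$ automorphism $\sigma\in\Aut(A_\gamma)$ with $\sigma(u_\gamma)=v_\gamma^*$, $\sigma(v_\gamma)=u_\gamma$, and simply averages a norming state $\psi$ for $h_\gamma$ over $\{\mathrm{id},\sigma,\sigma^2,\sigma^3\}$ to produce $\varphi$ with $\varphi(u_\gamma)=\varphi(v_\gamma)=\tfrac14\psi(h_\gamma)$, which has modulus $\|h_\gamma\|/4$. This works verbatim for rational and irrational $\gamma$. Your workaround for the irrational case --- approximating $\gamma$ by rationals $\gamma_k$ and passing to a limit of UCP maps --- can be made to work, but it is considerably more delicate than you suggest: the source algebras $A_{\gamma_k}$ vary with $k$, so one has to set up the compactness argument with some care, and one has to verify that the limiting map is a compression by $q'$-commuting unitaries rather than merely by $q'$-commuting contractions. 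None of this is needed once one has the averaging trick.

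Second, your rational-case argument has the gap you yourself flag, and it is genuine. You require a \emph{$+1$-eigenvector} of the transform $F$ lying in the top eigenspace of $h = X+X^*+Y+Y^*$, in order to then pass to a real eigenvector and deduce reality of $\langle X\eta,\eta\rangle$ via $R$-invariance. But the top eigenspace of $h$, although $F$-invariant, need not meet the $+1$-eigenspace of $F$; it could lie entirely inside the $-1$, $i$, or $-i$ eigenspace. The remedy is not to vary the phases $(\alpha,\beta)$ --- that does not help --- but to drop the insistence on eigenvalue $1$: since $F$ commutes with $h$, the top eigenspace always contains \emph{some} eigenvector $\eta$ of $F$, say $F\eta=\mu\eta$ with $|\mu|=1$. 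For any such $\eta$ one gets
\[
\langle \eta,X\eta\rangle \;=\; \langle \eta,F^*YF\,\eta\rangle \;=\; \langle F\eta, Y F\eta\rangle \;=\; |\mu|^2\langle\eta,Y\eta\rangle \;=\; \langle\eta,Y\eta\rangle ,
\]
and the reality of $\langle\eta,X\eta\rangle$ that you were chasing comes for free from $F^2$ rather than from reality of $\eta$: since $F^{*2}XF^2=X^*$, one has $\langle\eta,X^*\eta\rangle=\langle F^2\eta,XF^2\eta\rangle=\langle\eta,X\eta\rangle$, i.e.\ $\overline{\langle\eta,X\eta\rangle}=\langle\eta,X\eta\rangle$. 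Combined with $\langle\eta,h\eta\rangle=\|h\|$ this forces $\langle\eta,X\eta\rangle=\langle\eta,Y\eta\rangle=\|h\|/4$, and you are done. (The paper in fact also records, via its Lemma~\ref{lem:pi_11} and the reference to \cite{CEY}, that the top eigenvalue of $h$ is simple, so the norming eigenvector is \emph{automatically} an $\mathcal F$-eigenvector; but that extra information is not needed for the argument.)
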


\begin{proof}
Represent $C^*(U,V)$ concretely on a Hilbert space $\cH$.
Let $u_\gamma, v_\gamma$ be the universal generators of $A_\gamma$ and put $h_\gamma:=u_\gamma+u_\gamma^*+v_\gamma+v_\gamma^*$.
We claim that there exists a state $\varphi$ on $A_\gamma$ such that $|\varphi(u_\gamma)| = |\varphi(v_\gamma)| = \frac{\|h_\gamma\|}{4}$.
Assuming the existence of such a state for the moment being, let us define
\[
U' = U \otimes \frac{\pi(u_\gamma)}{\varphi(u_\gamma)} \quad , \quad V' = V \otimes \frac{\pi(v_\gamma)}{\varphi(v_\gamma)}.
\]
on $\cK = \cH \otimes \cL$, where $\pi\colon A_\gamma\to B(\cL)$ is the GNS representation of $\varphi$. 
These are $q'$-commuting scalar multiples of unitaries, and they have norm $\frac{4}{\|h_\gamma\|}$. 
By construction, there exists a unit vector $x \in \cL$ such that $\varphi(a) = \langle  x ,\pi(a) x \rangle$ for all $a \in A_\gamma$.
Consider the isometry $W \colon \cH \to \cH \otimes \cL$ defined by
\[
W h = h \otimes x \quad , \quad h \in \cH.
\]
Then $W^* U' W = \frac{1}{\varphi(u_\gamma)} \langle  x,\pi(u_\gamma) x\rangle U = U$ and $W^*V' W = \frac{1}{\varphi(v_\gamma)} \langle x,\pi(v_\gamma) x\rangle V = V$, and the proof of the existence of a dilation is complete.

To show the existence of a state $\varphi$ as above, we first choose a state $\psi$ such that
\[
|\psi(u_\gamma+u_\gamma^*+v_\gamma+v_\gamma^*)| = \|u_\gamma+u_\gamma^*+v_\gamma+v_\gamma^*\| .
\]
As $u_\gamma v^*_\gamma = e^{i\gamma} v^*_\gamma u_\gamma$ and $A_\gamma$ is universal, there exists an automorphism $\sigma \in \Aut(A_\gamma)$ determined by $\sigma(u_\gamma) = v_\gamma^*$ and $\sigma(v_\gamma) = u_\gamma$.
Then $\varphi := \frac{1}{4}(\psi + \psi \circ \sigma + \psi \circ \sigma^2 + \psi \circ \sigma^3)$ is a state that satisfies
\[
\varphi(u_\gamma) = \varphi(v_\gamma) = \frac{\psi(u_\gamma+u_\gamma^*+v_\gamma+v_\gamma^*)}{4} ,
\]
so $|\varphi(u_\gamma)| = |\varphi(v_\gamma)| = \frac{\|h_\gamma\|}{4}$ as required.

Now assume that $e^{i\gamma}=\frac{q'}{q}$ is a primitive $n$th root of unity. 
All we have to do, is to find a state $\varphi$ as above, so that the GNS representation space $\cL$ is $n$ dimensional. 
By Lemma~\ref{lem:pi_11} below, the matrices $X=\pi_{1,1}(u_\gamma), Y=\pi_{1,1}(v_\gamma)$ given by \eqref{eq:XY} fulfill 
\[
\|u_\gamma+u_\gamma^*+v_\gamma+v_\gamma^*\| = \|X+X^*+Y+Y^*\| ,
\]
(see also \cite[Corollary~4.8]{BocBook}). 
Therefore, there is a unit vector $x\in\mathbb{C}^n$ with 
\[
\|h_\gamma\|=\|X+X^*+Y+Y^*\|=|\langle x,(X+X^*+Y+Y^*) x\rangle|. 
\] 
Resembling the automorphism $\sigma$ above, there is an automorphism of $C^*(X,Y)=M_n(\mathbb{C})$ with $X\mapsto Y^*$ and $Y\mapsto X$, which is implemented by the unitary matrix 
\begin{align}
\mathcal F=\left(\frac{1}{\sqrt{n}}e^{i\gamma k\ell}\right)_{k,\ell=0}^{n-1}\in M_n(\mathbb{C})\label{eq:dFT}
\end{align}
i.e., $\mathcal{F}^*X\mathcal{F}=Y^*$ and $\mathcal{F}^*Y\mathcal{F}=X$ (this can be thought of as a version of the discrete Fourier transform on $\mathbb{C}^n$). 
In particular, $\mathcal{F}$ commutes with $X+X^*+Y+Y^*$, so we can choose $x$ to be an eigenvector of $\mathcal F$ as well. 
(In fact, $x$ is automatically an eigenvector of $\mathcal F$, because all nonzero eigenvalues of $X+X^*+Y+Y^*$ are simple; see the proof of Theorem 3.3 in \cite{CEY}.) 
Therefore, even without an extra symmetrization argument, we find
\[
\langle x,Xx\rangle=\langle x,\mathcal{F}^*Y\mathcal{F}x\rangle =\langle \mathcal{F}x,Y\mathcal{F}x\rangle = \langle x,Yx\rangle.
\]
We conclude that $\varphi:=\langle x,\pi_{1,1}(\cdot) x \rangle$ is a state on $A_\gamma$ with GNS-representation $\pi_{1,1}$ on $\mathbb{C}^n=:\mathcal{L}$ and $|\varphi(u_\gamma)| = |\varphi(v_\gamma)| = \frac{\|X+X^*+Y+Y^*\|}{4} = \frac{\|h_\gamma\|}{4}$, as required. 
\end{proof}

\begin{corollary}\label{thm:optdil}
Consider the generators $u_\theta,v_\theta$ of the rotation C*-algebra $A_\theta$.
Then $u_\theta,v_\theta$ can be dilated to a to a pair of commuting normals with norm $\frac{4}{\|u_\theta+u_\theta^*+v_\theta+v_\theta^*\|}$.
\end{corollary}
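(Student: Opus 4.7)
The plan is to derive this corollary directly from Theorem \ref{thm:optdil_qqtag} by specializing the target commutation parameter to $q' = 1$. Concretely, I would apply the theorem to the universal $q$-commuting pair $U = u_\theta,\, V = v_\theta$ with the choice $\theta' = 0$, so that $q' = 1$ and $\gamma = \theta' - \theta = -\theta$. The theorem then produces a dilation to $c U', c V'$ where $U', V'$ are $1$-commuting, i.e.\ commuting, unitaries, and the scaling constant is
\[
c = \frac{4}{\|u_{-\theta}+u_{-\theta}^*+v_{-\theta}+v_{-\theta}^*\|}.
\]
Since $c U'$ and $c V'$ are scalar multiples of commuting unitaries, they are commuting normals of norm $c$.

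The remaining step is to identify $\|u_{-\theta}+u_{-\theta}^*+v_{-\theta}+v_{-\theta}^*\|$ with $\|u_\theta+u_\theta^*+v_\theta+v_\theta^*\|$. For this I would exhibit an isomorphism $A_{-\theta} \cong A_\theta$: since $v_\theta u_\theta = e^{i\theta} u_\theta v_\theta$ implies $v_\theta^* u_\theta = e^{-i\theta} u_\theta v_\theta^*$, the pair $(u_\theta, v_\theta^*)$ is a pair of $e^{-i\theta}$-commuting unitaries generating $A_\theta$, hence by universality there is a $*$-isomorphism $A_{-\theta} \to A_\theta$ sending $u_{-\theta} \mapsto u_\theta$ and $v_{-\theta} \mapsto v_\theta^*$. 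Under this isomorphism
\[
u_{-\theta}+u_{-\theta}^*+v_{-\theta}+v_{-\theta}^* \;\longmapsto\; u_\theta+u_\theta^*+v_\theta^*+v_\theta,
\]
and taking norms yields the desired equality.

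Substituting this equality into the expression for $c$ gives $c = \tfrac{4}{\|u_\theta+u_\theta^*+v_\theta+v_\theta^*\|}$, which is exactly the stated dilation scale, completing the proof.

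There is essentially no obstacle here beyond recognizing the symmetry $A_\theta \cong A_{-\theta}$; the substance of the argument has already been carried out in Theorem \ref{thm:optdil_qqtag}. (Alternatively, one could verify this symmetry at the level of the almost Mathieu operator by noting that complex conjugation of the defining relation $vu=quv$ shows that $\|h_\gamma\|$ depends only on $|\gamma| \bmod 2\pi$, but the isomorphism route is cleaner.)
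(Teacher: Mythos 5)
Your proposal is correct and follows what the paper implicitly does: Corollary \ref{thm:optdil} is stated in the paper without a separate proof, being an immediate specialization of Theorem \ref{thm:optdil_qqtag} to $q'=1$, and you have supplied precisely the small bookkeeping step that this specialization requires, namely the identity $\|h_{-\theta}\|=\|h_\theta\|$ coming from the standard isomorphism $A_{-\theta}\cong A_\theta$ sending $(u_{-\theta},v_{-\theta})\mapsto(u_\theta,v_\theta^*)$. One equivalent way to avoid invoking this symmetry explicitly is to apply the theorem to the $e^{-i\theta}$-commuting pair $(v_\theta,u_\theta)$ with $\theta'=0$, which makes $\gamma=\theta$ directly; but this is the same observation in disguise, and your route is perfectly fine.
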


Putting together Proposition \ref{prop:equiv}, Proposition \ref{prop:lowerbnd} and Corollary \ref{thm:optdil}, we obtain the following exact expression of the dilation constant.

\begin{theorem}\label{thm:opt_dilconst}
For every $\theta \in \bR$,
\[
c_\theta = \frac{4}{\|u_\theta+u_\theta^*+v_\theta+v_\theta^*\|}.
\]
In fact, for every pair of $e^{i\theta}$-commuting unitaries $U,V$, there exists a pair $U_0,V_0$ of commuting unitaries, such that $c_\theta U_0,c_\theta V_0$ is a dilation of $U,V$.
\end{theorem}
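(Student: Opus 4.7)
My proof plan is to assemble the three ingredients flagged in the statement: the explicit commuting normal dilation of the universal pair provided by Corollary \ref{thm:optdil}, the transfer principle of Proposition \ref{prop:equiv} that propagates dilations of the universal pair to arbitrary $q$-commuting pairs, and the matching lower bound from Proposition \ref{prop:lowerbnd}. The strategy is simply to verify the two inequalities $c_\theta \leq \tfrac{4}{\|h_\theta\|}$ and $c_\theta \geq \tfrac{4}{\|h_\theta\|}$, and then sharpen ``commuting normal'' to ``commuting unitary''.

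For the upper bound I would start from Corollary \ref{thm:optdil}, which produces a commuting normal dilation of $u_\theta, v_\theta$ with norm exactly $\tfrac{4}{\|h_\theta\|}$. Invoking the implication $\itref{equiv:it:2}\Rightarrow\itref{equiv:it:1}$ of Proposition \ref{prop:equiv} (applied with $q' = 1$, noting that condition \itref{equiv:it:3} is equivalent to the others in that case), this dilation propagates to every $q$-commuting pair $U, V$. Hence $c_\theta \leq \tfrac{4}{\|h_\theta\|}$, and the infimum defining $c_\theta$ is in fact attained. For the lower bound I would apply Proposition \ref{prop:lowerbnd} to any $q$-commuting pair: it says that any commuting normal dilation $M, N$ must satisfy $\max(\|M\|, \|N\|) \geq \tfrac{4}{\|h_\theta\|}$, so the infimum cannot drop below this value. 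The two bounds together give the claimed equality.

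The final sentence of the theorem upgrades commuting normals to genuine commuting unitaries. I would take the commuting normal dilation $M, N$ of norm $c_\theta$ furnished above, normalise to contractions $M/c_\theta, N/c_\theta$, and then apply the explicit $2\times 2$ inflation described in the alternative portion of the proof of Proposition \ref{prop:equiv}, which manufactures a pair of commuting unitaries $U_0, V_0$ dilating $M/c_\theta, N/c_\theta$. By transitivity of compression, $U, V$ is then a compression of $c_\theta U_0, c_\theta V_0$, which is the stated conclusion. I do not expect any substantive obstacle: all the real work sits in Corollary \ref{thm:optdil} (the hard upper bound built from Weyl-style constructions and symmetrisation over the $\sigma$-orbit in $A_\gamma$) and in Proposition \ref{prop:lowerbnd} (the test polynomial $P_\lambda$ argument); at this point the remaining step is bookkeeping, fitting the upper-bound construction against the lower-bound obstruction and mildly strengthening the form of the dilation.
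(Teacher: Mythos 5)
Your proposal is correct and follows the paper's own argument exactly: the paper's proof is precisely the one-line assembly of Proposition \ref{prop:equiv}, Proposition \ref{prop:lowerbnd}, and Corollary \ref{thm:optdil}, which you unpack faithfully. The only tiny redundancy is the final $2\times 2$ inflation step — the dilation supplied by Theorem \ref{thm:optdil_qqtag} (with $q'=1$) already consists of scalar multiples of commuting \emph{unitaries}, so the upgrade from normals is available for free, though your route through the inflation is equally valid.
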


The dilation found in Theorem \ref{thm:optdil_qqtag} is also optimal in the general situation
, where $q$-commuting unitaries are dilated to $q'$-commuting unitaries with $\theta,\theta' \in \bR$, $q = e^{i\theta}$, $q' = e^{i\theta'}$, and $\gamma = \theta'-\theta$.
Indeed, consider the operator valued polynomial $P_\lambda(z,w)=\lambda + zu_{-\theta} + w v_{-\theta}$, where $\lambda >0$.
Just as in Remark \ref{rem:irrational}, we can conclude that $\|P_\lambda(u_\theta,v_
\theta)\|=\lambda + 2$.
On the other hand, we can identify $u_{\theta'} \otimes u_{-\theta}, v_{\theta'} \otimes v_{-\theta}$ with $u_\gamma, v_\gamma$.
Thus, writing $h_\gamma = u_\gamma + u_\gamma^* + v_\gamma + v_\gamma^*$,
\begin{align*}
\|P_\lambda(ru_{\theta'},rv_{\theta'})\|^2
& = \|\lambda + ru_\gamma + rv_\gamma\|^2 \\
& = \|(\lambda + ru_\gamma + rv_\gamma)^*(\lambda + ru_\gamma + rv_\gamma)\| \\
& = \|\lambda^2 + \lambda r h_\gamma + r^2(u_\gamma + v_\gamma)^*(u_\gamma + v_\gamma)\| \\
& \leq \lambda^2 + \lambda r \|h_\gamma\| + r^2\|u_\gamma + v_\gamma\|^2.
\end{align*}
So, similarly to the calculation in the proof of Proposition \ref{prop:lowerbnd}, we have that
\[
\limsup_{\lambda\to\infty}\frac{1}{\lambda} \left(\|P_\lambda(u_{\theta},v_{\theta})\|^2 - \|P_\lambda(ru_{\theta'},rv_{\theta'})\|^2 \right) \geq 4-r\|u_\gamma + u_\gamma^* + v_\gamma + v_\gamma^*\|.
\]
So if $ru_{\theta'},rv_{\theta'}$ is a dilation of $u_\theta,v_\theta$, it follows that $r\geq \frac{4}{\|u_\gamma + u_\gamma^* + v_\gamma + v_\gamma^*\|}$.
We record this result. 
\begin{theorem}\label{thm:optdil_gen}
Let $\theta,\theta' \in \bR$, $q = e^{i\theta}$, $q' = e^{i\theta'}$, and $\gamma = \theta'-\theta$. 
The smallest constant $c_{\theta,\theta'}$ such that every pair of $q$-commuting unitaries can be dilated to $c_{\theta,\theta'}$ times a pair of $q'$-commuting unitaries is given by 
\[
c_{\theta,\theta'} = c_\gamma = \frac{4}{\|u_\gamma + u_\gamma^* + v_\gamma + v_\gamma^*\|} .
\]
\end{theorem}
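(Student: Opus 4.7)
The upper bound $c_{\theta,\theta'}\le c_\gamma$ is already delivered by Theorem~\ref{thm:optdil_qqtag}, so the entire task is to show the matching lower bound $c_{\theta,\theta'}\ge c_\gamma=\frac{4}{\|h_\gamma\|}$, where $h_\gamma=u_\gamma+u_\gamma^*+v_\gamma+v_\gamma^*$. My plan is to adapt the ``test polynomial'' strategy from Proposition~\ref{prop:lowerbnd}, but replacing the scalar polynomial there with an operator-valued one whose coefficients absorb the rotation parameter $\theta$, so that the rotation angle $\gamma=\theta'-\theta$ appears naturally when comparing the two sides of the dilation.

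The key test object is the operator-valued polynomial
\[
P_\lambda(z,w) \;=\; \lambda + z\,u_{-\theta} + w\,v_{-\theta}, \qquad \lambda>0 .
\]
First I would compute $\|P_\lambda(u_\theta,v_\theta)\|=\lambda+2$. The point is that the pair $(u_\theta\otimes u_{-\theta},\; v_\theta\otimes v_{-\theta})$ is a pair of \emph{commuting} unitaries: the phases $e^{i\theta}$ and $e^{-i\theta}$ arising from the two commutation relations cancel. Moreover, as in Remark~\ref{rem:irrational} and via Lemma~\ref{RR}, these commuting unitaries generate (a faithful copy of) $A_0\cong C(\mathbb T^2)$, so $\|P_\lambda(u_\theta,v_\theta)\|=\|\lambda+u_0+v_0\|=\lambda+2$. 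Then, using Proposition~\ref{prop:equiv}, I may assume that the universal pair $u_\theta,v_\theta$ itself is a compression of $ru_{\theta'},rv_{\theta'}$ for any $r>c_{\theta,\theta'}$. Since compression is norm-decreasing on polynomial expressions,
\[
\lambda+2 \;=\; \|P_\lambda(u_\theta,v_\theta)\| \;\le\; \|P_\lambda(ru_{\theta'},rv_{\theta'})\| \;=\; \|\lambda+ru_\gamma+rv_\gamma\|,
\]
where in the last step I identify $u_{\theta'}\otimes u_{-\theta}$ and $v_{\theta'}\otimes v_{-\theta}$ with the universal $e^{i\gamma}$-commuting unitaries $u_\gamma,v_\gamma$.

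Next I would bound the right-hand side by expanding $\|A\|^2=\|A^*A\|$ with $A=\lambda+ru_\gamma+rv_\gamma$:
\[
\|\lambda+ru_\gamma+rv_\gamma\|^2 \;=\; \bigl\|\lambda^2 + \lambda r\, h_\gamma + r^2 (u_\gamma+v_\gamma)^*(u_\gamma+v_\gamma)\bigr\| \;\le\; \lambda^2 + \lambda r\|h_\gamma\| + r^2\|u_\gamma+v_\gamma\|^2.
\]
Combining with $(\lambda+2)^2=\lambda^2+4\lambda+4$, dividing by $\lambda$, and letting $\lambda\to\infty$ kills the $r^2$ term and yields
\[
4 \;\le\; r\,\|h_\gamma\|,
\]
i.e.\ $r\ge c_\gamma$. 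Taking the infimum over $r>c_{\theta,\theta'}$ gives $c_{\theta,\theta'}\ge c_\gamma$, completing the proof.

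\textbf{Main obstacle.} The only genuinely delicate point is the claim $\|P_\lambda(u_\theta,v_\theta)\|=\lambda+2$. In the irrational case it is essentially immediate from simplicity, but in the rational case one must check that the commuting pair $(u_\theta\otimes u_{-\theta},\,v_\theta\otimes v_{-\theta})$ actually generates a faithful copy of $C(\mathbb T^2)$ and not merely some proper quotient (otherwise the norm could drop below $\lambda+2$ and the argument breaks). This is where I would invoke Lemma~\ref{RR}: one checks condition \itref{RR:it:3} for the pair by exhibiting, for each $(\alpha,\beta)\in\mathbb T^2$, the evaluation representation at $(\alpha,\beta)$. With that in hand the rest of the computation is mechanical, and the irrational case follows either directly or, alternatively, by the continuity of $\theta\mapsto \|h_\theta\|$ established in Corollary~\ref{cor:Lipnorm}.
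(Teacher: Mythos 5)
Your proof is correct and is essentially the paper's own argument for Theorem~\ref{thm:optdil_gen}: the identical operator-valued test polynomial $P_\lambda(z,w)=\lambda+z\,u_{-\theta}+w\,v_{-\theta}$, the same identification of $u_{\theta'}\otimes u_{-\theta},\ v_{\theta'}\otimes v_{-\theta}$ with $u_\gamma,v_\gamma$, the same expansion of $\|\lambda+ru_\gamma+rv_\gamma\|^2$, and the same $\lambda\to\infty$ limit killing the $r^2$ term. Your suggested verification of Lemma~\ref{RR}\itref{RR:it:3} for the commuting pair $(u_\theta\otimes u_{-\theta},\,v_\theta\otimes v_{-\theta})$ in the rational case is a sound, slightly more explicit way to justify $\|P_\lambda(u_\theta,v_\theta)\|=\lambda+2$ there; the paper only spells this out for the irrational case in Remark~\ref{rem:irrational} and leaves the rational case to the reader (or to continuity).
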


\section{Numerical value of the dilation constants}\label{sec:numericalval}\label{sec:numerical}

In order to calculate the numerical value of the optimal dilation constants $c_\theta$, we need to evaluate the norm of the operator $h_\theta = u_\theta + u_\theta^* + v_\theta + v^*_\theta$.
There are some very delicate analytical estimates for $\|h_\theta\|$; see \cite{BZ05}.
On the other hand, one look at Hofstadter's butterfly \cite[Fig.~1]{Hof76} (see also Figure \ref{fig:plot} below) is enough to convince that $\|h_\theta\|$ is a non-smooth function of $\theta$, and thus we do not expect to be able to give a closed form analytic expression of $c_\theta$.
However, we do expect to be able to numerically approximate $c_\theta$ to very high precision.
First, the Lipschitz continuity of $c_\theta$ in the parameter $\theta$ allows us to focus on finding the value $c_\theta$ for rational angles.
Second, it turns out that we do not have to deal with the universal operators $u_\theta, v_\theta$, but can restrict attention to certain finite dimensional representations.
This reduction is achieved in the following lemma (the lemma is known --- see, e.g., \cite[Corollary 4.8]{BocBook} --- but we include a direct proof for completeness).

\begin{lemma}\label{lem:pi_11}
Let $A_\theta = C^*(u_\theta,v_\theta)$ be a rational rotation $C^*$-algebra and denote by $X = \pi_{1,1}(u_\theta)$ and $Y = \pi_{1,1}(v_\theta)$ the images of $u_\theta$ and $v_\theta$ under the standard representation as in Equation \eqref{eq:XY}.
Then
\[
\|u_\theta+u_\theta^*+v_\theta+v_\theta^*\|=\|X + X^* + Y + Y^*\| .
\]
Furthermore, the right hand side norm is the largest positive eigenvalue of $X + X^* + Y + Y^*$ and this eigenvalue is attained at an eigenvector with nonnegative entries. 
\end{lemma}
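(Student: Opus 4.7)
The plan is to combine the representation theory of $A_\theta$ with a Perron--Frobenius argument. From the preliminary facts recalled in Section 2, in the rational case every irreducible representation of $A_\theta$ is unitarily equivalent to some $\pi_{\alpha,\beta}$ with $(\alpha,\beta)\in\mathbb{T}^2$, and the C*-norm of any element equals the supremum over irreducible representations of the norms of its images. Applied to $h_\theta$, this yields
\[
\|h_\theta\| \;=\; \sup_{(\alpha,\beta)\in\mathbb{T}^2}\|H(\alpha,\beta)\|,\qquad H(\alpha,\beta) := \alpha X + \bar\alpha X^* + \beta Y + \bar\beta Y^*.
\]
For each fixed unit vector $v\in\mathbb{C}^n$, an elementary phase optimization (note that $\langle v,H(\alpha,\beta)v\rangle = 2\operatorname{Re}(\alpha\langle v,Xv\rangle) + 2\operatorname{Re}(\beta\langle v,Yv\rangle)$ and choose the arguments of $\alpha,\beta$ to align both terms with the positive real axis) gives
\[
\sup_{\alpha,\beta\in\mathbb{T}}\bigl|\langle v,H(\alpha,\beta)v\rangle\bigr| \;=\; 2|\langle v,Xv\rangle|+2|\langle v,Yv\rangle|,
\]
so the first assertion of the lemma reduces to proving
\begin{equation*}
\sup_{\|v\|=1}\bigl(2|\langle v,Xv\rangle|+2|\langle v,Yv\rangle|\bigr) \;=\; \|H(1,1)\|.
\end{equation*}

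Next, I would apply Perron--Frobenius to $H(1,1)$. Since $H(1,1)$ is Hermitian with non-negative off-diagonal entries (the piece $Y+Y^*$ is the $0/1$ adjacency matrix of the $n$-cycle, and $X+X^*$ is real diagonal), the shifted matrix $H(1,1)+cI$ is entrywise non-negative and irreducible for any sufficiently large $c$. Perron--Frobenius then yields a strictly positive eigenvector $w>0$, unique up to positive rescaling, for the largest eigenvalue $\lambda_+$ of $H(1,1)$, which is simple. I would exploit the Fourier unitary $\mathcal{F}$ of \eqref{eq:dFT}, which satisfies $\mathcal{F}^*X\mathcal{F}=Y^*$ and $\mathcal{F}^*Y\mathcal{F}=X$ and hence commutes with $H(1,1)$. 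Simplicity of $\lambda_+$ together with $\mathcal{F}^4=I$ and the strict positivity of $w$ forces $\mathcal{F}w=w$ (the eigenvalues $-1,\pm i$ of $\mathcal{F}$ are excluded because each would contradict $w>0$ by inspection of the zeroth coordinate). Consequently $\langle w,Xw\rangle=\overline{\langle w,Yw\rangle}$; but $\langle w,Yw\rangle=\sum_k w_kw_{k+1}$ is already a positive real number $A_0$, so $\langle w,Xw\rangle=A_0$ as well, and therefore $\lambda_+=\langle w,H(1,1)w\rangle=4A_0=2|\langle w,Xw\rangle|+2|\langle w,Yw\rangle|$. This establishes the ``$\ge$'' direction in the displayed equality and exhibits $w$ as the desired non-negative eigenvector for $\lambda_+$.

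The main obstacle is the reverse inequality, namely that $2|\langle v,Xv\rangle|+2|\langle v,Yv\rangle|\le\lambda_+$ for every unit vector $v$. Once it is in hand, the chain
\[
\lambda_+\;\le\;\|H(1,1)\|\;\le\;\|h_\theta\|\;=\;\sup_{\|v\|=1}\bigl(2|\langle v,Xv\rangle|+2|\langle v,Yv\rangle|\bigr)\;\le\;\lambda_+
\]
collapses to equality, simultaneously giving $\|h_\theta\|=\|H(1,1)\|=\lambda_+$ and identifying $w\ge 0$ as a corresponding eigenvector; this is the full content of the lemma. To prove the remaining inequality I would analyse the maximizers of the continuous function $v\mapsto 2|\langle v,Xv\rangle|+2|\langle v,Yv\rangle|$ on the unit sphere, using the $\mathbb{Z}_n^2$ gauge symmetry (conjugation of $X,Y$ by $X^jY^k$ to shift the phases of $\langle v,Xv\rangle$ and $\langle v,Yv\rangle$ through the $n$th roots of unity) together with the $\mathcal{F}$-symmetry to restrict attention to a small fundamental domain, and then invoking first-order optimality conditions to argue that, up to these symmetries, any maximizer must coincide with a scalar multiple of the Perron--Frobenius eigenvector $w$, on which the function takes the value $\lambda_+$.
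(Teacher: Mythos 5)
Your reduction to showing $\sup_{\|v\|=1}\bigl(2|\langle v,Xv\rangle|+2|\langle v,Yv\rangle|\bigr)=\lambda_+$ is correct, and your Perron--Frobenius argument establishing the ``$\ge$'' half of this equality (and simultaneously producing the nonnegative eigenvector) is valid and nicely done: $H(1,1)+cI$ is nonnegative irreducible for large $c$, yielding a simple top eigenvalue with a strictly positive eigenvector $w$; the observation that $\mathcal{F}$ commutes with $H(1,1)$, $\mathcal{F}^4=I$, and $(\mathcal{F}w)_0=\tfrac{1}{\sqrt n}\sum_\ell w_\ell>0$ rules out eigenvalues $-1,\pm i$, forces $\mathcal{F}w=w$, and hence $\langle w,Xw\rangle=\overline{\langle w,Yw\rangle}\in\mathbb{R}_{>0}$, so $\lambda_+=4A_0$.

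However, the reverse inequality $2|\langle v,Xv\rangle|+2|\langle v,Yv\rangle|\le\lambda_+$, which is the entire content of the equality $\|h_\theta\|=\|X+X^*+Y+Y^*\|$, is not proved --- you explicitly flag it as ``the main obstacle'' and offer only a sketch: analyze maximizers via gauge symmetry and first-order optimality and argue they coincide, up to symmetry, with $w$. This is a genuine gap. First, the function is non-smooth where $\langle v,Xv\rangle=0$ or $\langle v,Yv\rangle=0$, so first-order optimality requires subdifferential analysis. Second, even at a smooth critical point $v$, the Lagrange condition says only that $v$ is an eigenvector of $H(e^{-i\phi_X},e^{-i\phi_Y})$ with eigenvalue $f(v)$, where $\phi_X,\phi_Y$ are the arguments of $\langle v,Xv\rangle$, $\langle v,Yv\rangle$; concluding $f(v)\le\lambda_+$ from this would require $\|H(e^{-i\phi_X},e^{-i\phi_Y})\|\le\|H(1,1)\|$, which is the original problem. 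Third, the $\mathbb{Z}_n^2$ gauge symmetry only shifts $\phi_X,\phi_Y$ by multiples of $2\pi/n$, whereas the critical phases $\phi_X,\phi_Y$ are a priori arbitrary, so the claimed reduction to a ``small fundamental domain'' on which the maximizer is $w$ does not follow. The paper instead proves the reverse inequality directly and entirely by hand, iterating the following step: for any $(\alpha,\beta)$ and any maximizing unit vector $x$ for $\|h^{(\alpha,\beta)}\|$, replace $x$ by $|x|$ and use the triangle inequality on the off-diagonal (cyclic) terms to get $\|h^{(\alpha,\beta)}\|\le\|h^{(\pm\alpha,1)}\|$, then use the Fourier symmetry $\|h^{(\alpha,1)}\|=\|h^{(1,\alpha)}\|$ to swap roles and repeat, terminating after at most three rounds at $\|h^{(1,1)}\|$. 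Without an argument of this kind (or an equivalent), your proof of the first assertion is incomplete.
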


\begin{proof}
   We know that  $\|u_\theta+u_\theta^*+v_\theta+v_\theta^*\|= \sup_{(\alpha, \beta) \in \bT^2}\|\pi_{\alpha,\beta}(u_\theta+u_\theta^*+v_\theta+v_\theta^*)\|$, so we have to show that $\|\pi_{\alpha,\beta}(u_\theta+u_\theta^*+v_\theta+v_\theta^*)\|\leq \|\pi_{1,1}(u_\theta+u_\theta^*+v_\theta+v_\theta^*)\|$ for all $\alpha,\beta\in\mathbb T$. In the following we will write $h^{(\alpha,\beta)}:=\pi_{\alpha,\beta}(u_\theta+u_\theta^*+v_\theta+v_\theta^*)=\alpha X + \overline\alpha X^* + \beta Y + \overline\beta Y^*$. First, note that $h^{(-\alpha,-\beta)}=-h^{(\alpha,\beta)}$. So for at least one of the two selfadjoint matrices $h^{(\alpha,\beta)}$ and $h^{(-\alpha,-\beta)}$ its norm is given by the largest eigenvalue. Let $\alpha=e^{i\gamma}$. For some unit vector $x\in \mathbb C^n$ and a sign $\varepsilon\in\{+1,-1\}$, we have (with the convention that $x_{n+1}=x_1$)
  \begin{align*}
    \|h^{(\alpha,\beta)}\|
    =\|\varepsilon h^{(\alpha,\beta)}\|
    &=\langle x,\varepsilon h^{(\alpha,\beta)}x\rangle\\
    &=\varepsilon\left(\sum_{k=1}^{n} 2\cos (k\theta + \gamma) |x_k|^2 + 2\re \beta\overline{x_k}x_{k+1}\right)\\
    &\leq    \sum_{k=1}^{n} 2\varepsilon\cos (k\theta + \gamma) |x_k|^2 + 2 \left|x_k\right|\left|x_{k+1}\right|\\
    &=\langle \left|x\right|,\pi_{\varepsilon\alpha,1}(u_\theta+u_\theta^*+v_\theta+v_\theta^*)\left|x\right|\rangle\\
    &\leq \|h^{(\varepsilon\alpha,1)}\|
  \end{align*}
  where $\left|x\right|:=(|x_k|)_{k=1}^n$.
  Since $h^{(\alpha,\beta)}$ and $ h^{(\overline\beta,\alpha)}$ are unitarily equivalent by the modified discrete Fourier transformation \eqref{eq:dFT}, we get $\|h^{(\varepsilon\alpha,1)}\| = \|h^{(1,\varepsilon\alpha)}\|$ and can repeat the same argument for $\|h^{(1,\varepsilon\alpha)}\|$ to conclude that $\|h^{(1,\varepsilon\alpha)}\|\leq \|h^{(\delta,1)}\|=\|h^{(1,\delta)}\|$ for some sign $\delta\in\{+1,-1\}$. If $\delta=1$, we are done. If $\delta=-1$ note that $h^{(1,-1)}$ is unitarily equivalent to $h^{(-1,1)}=-h^{(1,-1)}$, in particular, its norm is given by the largest eigenvalue, because the spectrum is symmetric. So we can apply the above procedure a third time with $\varepsilon=1,\alpha=1,\beta=-1$ to show $\|h^{(1,-1)}\|\leq\|h^{(1,1)}\|$, which finishes the proof of the first statement.

  If we now go through the very same calculation for $h^{(1,1)}$, we find that
  \[\|h^{(1,1)}\|\leq \langle |x|,h^{(\varepsilon,1)}|x|\rangle \leq \|h^{(\varepsilon,1)}\|\]
  for some unit vector $x$ and some sign $\varepsilon$. If $\varepsilon=1$, we see that $|x|$ must be a positive eigenvector with eigenvalue $\|h^{(1,1)}\|$. If $\varepsilon=-1$, a final round for $h^{(1,-1)}$ yields
  \[\|h^{(1,1)}\|\leq \|h^{(-1,1)}\|=\|h^{(1,-1)}\|\leq \langle |x|,h^{(1,1)}|x|\rangle \leq \|h^{(1,1)}\|\]
  (as above, we can choose the sign to be $1$ here because $h^{(1,-1)}$ has symmetric spectrum).
  %
\end{proof}

In light of the above lemma, we numerically computed $\|u_\theta + u_\theta^* + v_\theta + v_\theta^*\|$ for rational angles.
In Figure \ref{fig:plot} we plotted a graph of $c_\theta$ as a function of $\theta$.
Examining the computed values shows that $\max_\theta c_\theta \geq 1.543$.

It is interesting to note that a minor adaptation of the Inclusion Algorithm from \cite[Section 4.1]{HKM13} can be used to directly compute the dilation constants for the standard representation of $A_\theta$ for rational $\theta$ (it is not hard to show that, in the rational case, the dilation constant for the standard representation is equal to the dilation constant for the universal representation, i.e., to $c_\theta$).
The results agree nicely with the values we obtain for $c_\theta$, as they should.
The Python code that was used for all of the above numerical computations can be found in the link \url{https://colab.research.google.com/drive/1imIjguPLWA6ll5mLU1DQ9bFqTT-t6jTe}.

\begin{figure}
\caption{The dilation constant $c_\theta$ as a function of $\theta$}
\includegraphics[scale=0.65]{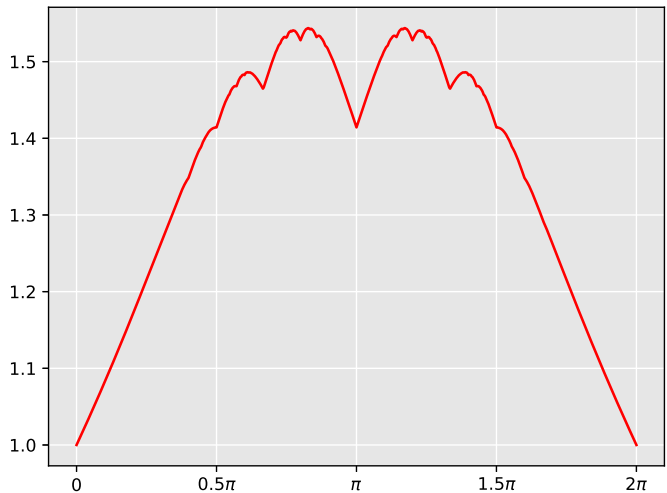}
\label{fig:plot}
\end{figure}

We are able to compute some values of $c_\theta$ explicitly, as in the following examples.

\begin{example}\label{ex:23}
  For $\theta=\frac{2}{3}\pi$, one can easily find by diagonalization that
  \[\|h_{\frac{2}{3}\pi}\|=\left\|
      \begin{pmatrix}
        2&1&1\\1&-1&1\\1&1&-1
      \end{pmatrix}
      \right\|
      =1+\sqrt{3}
    ,\]
  so $c_{\frac{2}{3}\pi}=\frac{4}{\|h_{\frac{2}{3}\pi}\|}=2\sqrt{3}-2\approx 1.4641$, showing that the lower bound in \eqref{eq:Cd} can be improved.
\end{example}
\begin{example}\label{ex:45}
  Consider $\theta=\frac{4}{5}\pi$. Then we have $2\cos\theta=2\cos 4\theta=-\gamma$ and $2\cos 2\theta=2\cos 3\theta= \gamma-1$, where $\gamma=\frac{1+\sqrt{5}}{2}$ is the golden ratio, so
  \begin{align*}
    h:=X+X^*+Y+Y^*=
    \begin{pmatrix}
      2&1&0&0&1\\1&-\gamma&1&0&0\\0&1&\gamma-1&1&0\\0&0&1&\gamma-1&1\\1&0&0&1&-\gamma
    \end{pmatrix}
  \end{align*}
  which has the positive eigenvector $x=(2\gamma,1,1,1,1)$ with eigenvalue $\gamma + 1$. Note that an eigenvector for another eigenvalue must be orthogonal to $x$ and, thus, cannot have nonnegative entries. So, Lemma~\ref{lem:pi_11}  yields $\|h\|=\gamma + 1$. 
This gives $c_{\frac{4}{5}\pi} = 4/(\gamma+1) \approx 1.5279$.
However, as we saw, this is still not the highest value of $c_\theta$.
In fact, the maximum of $c_\theta$ cannot be attained at any rational angle (see Equation 4.2 in \cite{BR90}).
\end{example}

We close with conjecturing that the maximum of $c_\theta$ is actually attained at the so-called \emph{silver mean} $\theta_s=\frac{2\pi}{\gamma_s}=2\pi(\sqrt{2}-1)$, where $\gamma_s=\sqrt{2}+1$ is the silver ratio. This is strongly supported by our numerical tests. 
The silver mean is also an important special parameter in the study of self-similarity of the Hofstadter butterfly; see, e.g., \cite{RP97}. 
Using the rational approximation $\frac{1}{\gamma_s}\approx \frac{2378}{5741}$ with an error of less than $1.1\cdot 10^{-8}$ and the Lipschitz continuity of $c_\theta$ stated in Corollary~\ref{cor:ctheta}, we find that $c_{\theta_s}\approx 1.5437772$ with an error of less than $10^{-7}$. 
This is done by computing the norm of the $5741 \times 5741$ selfadjoint matrix $h = X + X^* + Y + Y^*$, where $X,Y$ are the standard representation of $u_\theta, v_\theta$ with $\theta = \frac{2378}{5741}2\pi$. 
We believe that our error bound of $10^{-7}$ is reliable, because we can represent $h$ up to two times the machine precision ($\approx 2\times 10^{-16}$) in operator norm (the error is due to the inexact values $2 \cos(k \theta)$ appearing on the diagonal of the computer representation of $h$), and then the computation of the norm of a selfadjoint matrix is a stable numerical task.

\section*{acknowledgments}

The authors would like to thank Joseph Avron, Siegfried Beckus, Florin Boca, Yoram Last and Terry Loring for some helpful discussions and for suggesting references and leads.
Benjamin Passer and Ron Rosenthal both contributed insightful comments at key moments.
Special thanks go to Mattya Ben-Efraim and Yuval Yifrach for running interesting preliminary numerical computations which suggested that an examination of the dilation constants for $q$-commuting unitaries is called for. Finally, this paper was significantly improved following the remarks of four anonymous referees, to whom we are grateful. 


\bibliographystyle{amsplain}

\end{document}